\newcommand{\myimage}[2]{\includegraphics[height=#1]{#2}} 
\DeclareMathOperator{\linspan}{Span}
\renewcommand{\Tilde}{\widetilde}
\renewcommand{\Bar}{\overline}
\newcommand{\RR}{\mathbb{R}}
\newcommand{\NN}{\mathbb{N}}
\newcommand{\cH}{\mathcal{H}}
\newtheorem{theorem}{Theorem}[section]
\newtheorem{prop}[theorem]{Proposition}
\newtheorem{lemma}[theorem]{Lemma}
\theoremstyle{definition}
\newtheorem{remark}[theorem]{Remark}
\DeclareMathOperator{\spec}{spec}
\DeclareMathOperator{\dist}{dist}
\DeclareMathOperator{\supp}{supp}
\begin{document}

\title{\bf Tunneling between corners\\ for Robin Laplacians}

\author{\Large\sc Bernard Helffer  and \Large\sc Konstantin Pankrashkin\\ [\medskipamount]
\it Laboratoire de math\'ematiques (UMR 8628), Universit\'e Paris-Sud, \\
\it B\^atiment 425, 91405 Orsay Cedex, France.\\[\medskipamount]
E-mail: \texttt{bernard.helffer@math.u-psud.fr},\\
\texttt{konstantin.pankrashkin@math.u-psud.fr}
}

\date{}

\maketitle

\begin{abstract}
We study the  Robin Laplacian in a domain with two corners of the same opening,
and we calculate the asymptotics of the two lowest eigenvalues
as the distance between the corners increases to infinity.
\end{abstract}

\section{Introduction}

Let $\Omega\subset\RR^d$ be an open set with a sufficiently regular boundary (e.g. compact Lipschitz 
or non-compact with a suitable  behavior at infinity) and $\beta\in\RR$.
By the associated Robin Laplacian $H_\beta\equiv H(\Omega,\beta)$ we mean the operator
acting  in a weak sense as
\[
H_\beta f:=-\Delta f, \quad \dfrac{\partial f}{\partial n}=\beta f \text{ at } \partial\Omega\,,
\]
where $n$ is the unit outward normal at the boundary; a rigorous definition
is given below (Subsection~\ref{poly}).
In various applications, such as the study of the critical temperature
in the enhanced surface superconductivity (and in this context the Robin condition is also called the De Gennes condition, see \cite{Ka} and references therein) or the analysis of certain reaction-diffusion
processes, one is interested in the spectral properties of $H_\beta$,
the behavior of the spectrum as $\beta\to+\infty$ being of a particular importance~\cite{gs1,lacey}.
For sufficiently regular $\Omega\,,$ it was shown in~\cite{lp} that
the bottom of the spectrum $E(\beta)$ behaves as
\[
E(\beta)=-C_\Omega \beta^2+o(\beta^2) \text{ as } \beta\to+\infty\,,
\]
where $C_\Omega>0$ is a constant depending on the geometry of the boundary.
In particular, $C_\Omega=1$ for smooth domains, and some information
on the subsequent terms of the asymptotics was obtained e.g. in~\cite{exner,FK,kp}.
In the non-smooth case one can have $C_\Omega>1$, and the
constant is understood better in the $2D$ case. If $\omega$ denotes
the minimal corner at the boundary, then
\[
C_\Omega=\dfrac{2}{1-\cos\omega} \text{ if } \omega<\pi\,,
\text{ and } C_\Omega=1 \text{ otherwise}.
\]
In other words, intuitively, each corner at the boundary can be viewed as a geometric well, and 
it is the deepest well which determines the principal term of the spectral asymptotics,
and one may expect that the respective vertices serve as the asymptotic support of the respective eigenfunction.
One meets the natural question of what happens if one has several wells of the same depth, i.e.
several corners with  the same opening. 
Similar questions appear in various settings: semiclassical limit for multiple
wells~\cite{hs1,hs2,bhbook,ank,bds},
distant potential perturbations~\cite{daumer}, domains coupled by a thin tube~\cite{bhm}
or waveguides with distant boundary perturbations~\cite{BE},
in which the interaction between wells gives rise to an exponentially small difference between
the lowest eigenvalues. The aim of the present paper is to obtain a result in the same spirit
for Robin Laplacians in a class of corner domains.
We note that the eigenvalues $E(\Omega,\beta)$ of $H(\Omega,\beta)$ satisfy the obvious scaling relation,
\begin{equation}
      \label{eq-scale}
E(\Omega,\ell \beta)=\ell^2 E(\ell\Omega,\beta)\,, \quad \ell>0\,,
\end{equation}
and the regime $\beta\to+\infty$ is essentially equivalent to the study
of $E(\ell\Omega,\beta)$ as $\ell\to+\infty\,$ with a fixed $\beta$. We prefer to deal with scaled domains
in order to have finite limits.

\begin{figure}[t]
\centering
\myimage{40mm}{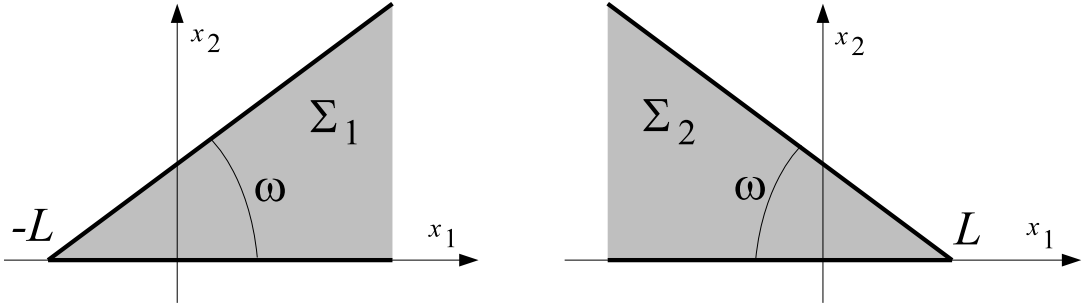}
\caption{The infinite sectors $\Sigma_1$ and $\Sigma_2$.}\label{fig1}
\end{figure}

Let us describe our result.
Let $\omega\in (0,\pi)$ and $L>0$. Denote by $\Omega_L$ the intersection of
the two infinite sectors $\Sigma_1$ and $\Sigma_2$,
\begin{align*}
\Sigma_1&:=\Big\{ (x_1,x_2):
\arg\big((x_1+ L) +i x_2\big)\in (0,\omega)
\Big\},\\
\Sigma_2&:=\big\{ (x_1,x_2):
(-x_1,x_2)\in \Sigma_1\big\},
\end{align*}
see Fig.~\ref{fig1}. Clearly, for $\omega\ge \pi/2$ the set $\Omega_L$ is
an infinite biangle whose vertices are the points
$A_1=(-L,0)$ and  $A_2=(L,0)$,
while
for $\omega<\pi/2$ we obtain the interior of the triangle
whose vertices are the above points $A_{1}$ and $A_2$
and the point $A_3=(0,L \tan\omega)$,
see Figure~\ref{fig2}. Let us fix some $\beta>0$.
The associated Robin Laplacian
\[
H_L:=H(\Omega_L,\beta)
\] is a self-adjoint operator
in $L^2(\Omega_L;\mathbb R)$, see Subsection~\ref{poly} for the rigorous definition.
Elementary considerations show that if $\omega<\pi/2$, then $H_L$ has a compact resolvent,
and the spectrum consists of eigenvalues $E_1(L)<E_2(L)\le \dots$. As usually, each eigenvalue
may appear several times according to its multiplicity.
For $\omega\ge\pi/2$ one has 
\[
\spec_\text{ess}H_L=[-\beta^2,+\infty)\,,
\]
 so the
discrete spectrum consists of  eigenvalues $E_1(L)<E_2(L)\le \dots<-\beta^2\,$.

Our main  result is as  follows:
\begin{theorem}\label{thm1}
Assume that either $\omega\in\big(0,\frac\pi 3\big)$ or $\omega\in\big[\frac\pi 2,\pi\big)$. Then, the two lowest eigenvalues satisfy, 
as $L\to+\infty$, 
\begin{align*}
E_1(L)&=-\dfrac{2\beta^2}{1-\cos\omega}\\
&\quad-4\beta^2\dfrac{1+\cos\omega}{(1-\cos\omega)^2}
\exp\Big(-2\beta\dfrac{1+\cos\omega}{\sin\omega} L\Big) +
O\bigg(L^{2} \exp\Big(-(2+\delta)\beta\dfrac{1+\cos\omega}{\sin\omega} L\Big)\bigg),\\
E_2(L)&=-\dfrac{2\beta^2}{1-\cos\omega}\\
&\quad +4\beta^2\dfrac{1+\cos\omega}{(1-\cos \omega)^2}
\exp\Big(-2\beta\dfrac{1+\cos\omega}{\sin\omega} L\Big) +
O\bigg(L^{2} \exp\Big(-(2+\delta)\beta\dfrac{1+\cos\omega}{\sin\omega} L\Big)\bigg),\\
\end{align*}
where $\delta=2\big((\cos \omega)^{-1}-1\big)$ for $\theta<\pi/3$ and $\delta=2$
for $\omega\ge \pi/2\,$.
 In particular,
\begin{multline*}
E_2(L)-E_1(L)=
8\beta^2\dfrac{1+\cos\omega}{(1-\cos\omega)^2}
\exp\Big(-2\beta\dfrac{1+\cos\omega}{\sin\omega} L\Big)\\ +
O\bigg(L^{2} \exp\Big(-(2+\delta)\beta\dfrac{1+\cos\omega}{\sin\omega} L\Big)\bigg).
\end{multline*}
\end{theorem}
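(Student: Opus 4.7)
I would approach this via a Helffer--Sj\"ostrand-type double-well argument, exploiting the reflection symmetry $x_1\mapsto-x_1$ of $\Omega_L$. Since the ground state of $H_L$ is positive, it must be symmetric, so that $E_1(L)$ is the infimum of the spectrum in the symmetric subspace and $E_2(L)$ the infimum in the antisymmetric one. Equivalently, on $\Omega_L^+:=\Omega_L\cap\{x_1>0\}$, both $E_1(L)$ and $E_2(L)$ are the lowest eigenvalues of the Robin Laplacian augmented, respectively, by a Neumann and a Dirichlet condition on the artificial interface $\{x_1=0\}\cap\Omega_L$.

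The reference functions are the explicit single-corner Robin ground states. Setting $a:=\beta(1+\cos\omega)/\sin\omega=\beta\cot(\omega/2)$, a direct inspection shows that
\[
\psi_1(x):=e^{-a(x_1+L)-\beta x_2},\qquad \psi_2(x):=e^{-a(L-x_1)-\beta x_2}
\]
are $(-\Delta)$-eigenfunctions on $\mathbb{R}^2$ with eigenvalue $E_0:=-(a^2+\beta^2)=-2\beta^2/(1-\cos\omega)$, and that $\psi_j$ satisfies the Robin condition exactly on $\partial\Sigma_j$. Two identities on the symmetry line will play a decisive role: on $\{x_1=0\}$ one has $\psi_1=\psi_2$ and $\partial_{x_1}\psi_1=-\partial_{x_1}\psi_2$. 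Therefore $\phi_+:=\psi_1+\psi_2$ automatically satisfies the Neumann condition there, and $\phi_-:=\psi_2-\psi_1$ the Dirichlet one. In the biangle case $\omega\ge\pi/2$, the physical boundary of $\Omega_L^+$ lies inside $\partial\Sigma_2$, so $\psi_2$ already complies with Robin on the whole physical boundary; in the triangle case $\omega<\pi/3$, one also inserts a smooth cut-off near the auxiliary vertex $A_3$, which is permissible precisely because the opening $\pi-2\omega$ at $A_3$ is strictly larger than $\omega$, making $A_3$ a strictly shallower ``well''.

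The eigenvalue asymptotics will then follow from Green's identity on $\Omega_L^+$ applied to the pair $(u,\psi_2)$, where $u$ is the true Neumann (resp.\ Dirichlet) ground state with eigenvalue $\lambda\in\{E_1(L),E_2(L)\}$. The physical boundary contributions cancel on $\partial\Omega_L^+\cap\partial\Sigma_2$ by the Robin condition, and after collecting the ``wrong-slant'' defects into an error term $R$, the main identity reads
\[
(\lambda-E_0)\int_{\Omega_L^+}u\,\psi_2\,dx \;=\; \int_{\{x_1=0\}\cap\Omega_L^+}\bigl(\psi_2\,\partial_{x_1}u-u\,\partial_{x_1}\psi_2\bigr)\,dx_2 \;+\; R.
\]
Substituting the leading approximation $u\approx\phi_\pm/\|\phi_\pm\|_{\Omega_L^+}$, and using the key relation $\phi_+|_{x_1=0}=2\psi_2|_{x_1=0}$ (which produces the crucial factor $2$ that a naive single-well ansatz would miss), together with the explicit values
\[
\int_0^{+\infty}\psi_2^2\bigl|_{x_1=0}\,dx_2 \;=\; \frac{e^{-2aL}}{2\beta},\qquad \|\psi_2\|_{\Sigma_2}^2 \;=\; \frac{\sin^3(\omega/2)}{2\beta^2\cos(\omega/2)},
\]
one recovers, after trigonometric simplification, the announced leading coefficients $\mp 4\beta^2(1+\cos\omega)/(1-\cos\omega)^2$ ($-$ for $E_1$, $+$ for $E_2$) multiplying $e^{-2aL}$.

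The remainder must be absorbed into $L^2e^{-(2+\delta)aL}$. The size of $u-\phi_\pm/\|\phi_\pm\|$ is controlled by the quasimode defect via standard min-max arguments. In the biangle case, all defect contributions --- the quadratic overlaps $\|\psi_1\|_{\Omega_L^+}^2$ and $\langle\psi_1,\psi_2\rangle_{\Omega_L^+}$, together with the Robin mismatch of $\psi_1$ on the infinite slant of $\partial\Sigma_2\cap\{x_1>0\}$ (concentrated near $A_2$) --- are of order $e^{-4aL}$ up to polynomial factors in $L$, yielding $\delta=2$. In the triangle case the dominant subleading source is the Robin mismatch of $\psi_1$ on the slant $A_2A_3$, whose contribution is concentrated near $A_3$ with pointwise size $\psi_1(A_3)^2=\exp\bigl(-2\beta L(\cot(\omega/2)+\tan\omega)\bigr)$; the elementary trigonometric identity $\tan\omega\tan(\omega/2)=\sec\omega-1$ then translates this into $\delta=2(\sec\omega-1)$. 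The main technical obstacle will be to establish Agmon-type weighted exponential-decay estimates for the true eigenfunctions near $A_2$, with weight $a(L-x_1)+\beta x_2$ matching the ansatz, sharp enough that the substitution $u\mapsto\phi_\pm/\|\phi_\pm\|_{\Omega_L^+}$ in both the boundary and bulk integrals above is justified with the required precision.
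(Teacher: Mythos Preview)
Your approach is genuinely different from the paper's and recovers the correct leading term. You reduce by the reflection symmetry to two half-domain problems (Neumann and Dirichlet on $\{x_1=0\}$) and extract $\lambda-E_0$ from a Herring-type Green's identity with the exact sector eigenfunction $\psi_2$; the paper instead works on the full domain with the Helffer--Sj\"ostrand interaction-matrix scheme: it builds cut-off quasimodes $v_jU_j$, shows the two-dimensional span $E$ is close to the true spectral subspace $F$, and reads off $E_1,E_2$ as the eigenvalues of the $2\times2$ matrix of $\Pi_EH_L$, the tunneling coefficient $w$ appearing as its off-diagonal entry. Two remarks. First, since the entire physical boundary of $\Omega_L^+$ lies in $\partial\Sigma_2$ (bottom edge and right slant in both the biangle and the triangle), $\psi_2$ satisfies Robin there exactly, so your term $R$ is in fact zero; all the error lives in the substitution $u\to\phi_\pm/\|\phi_\pm\|$. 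Second, and more seriously, that substitution in the \emph{interface} integral is where the two approaches really diverge. The paper needs only a rough Agmon estimate with an $\varepsilon$-loss in the exponent, used once to get $d(F,E)<1$; after that the sharp remainder $O(L^2e^{-2\tau\beta L\cot\alpha})$ comes entirely from explicit quasimode computations and the true eigenfunctions never re-enter. Your route, by contrast, needs a trace bound of the form $\int_{\{x_1=0\}}|u-\phi_\pm/\|\phi_\pm\||\,\psi_2\,dx_2=O(e^{-(1+\tau)aL})$; a plain quasimode estimate combined with the trace theorem yields only $O(e^{-2aL})$ here, one power of $e^{-aL}$ short of what the biangle case ($\tau=2$) requires. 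The Agmon estimate you propose, with the \emph{exact} weight $a(L-x_1)+\beta x_2$ and no $\varepsilon$-loss, is not standard and would likely require as much machinery as the matrix method itself. So your scheme is sound for the leading coefficient and for identifying $\delta$ (your computation $\tan\omega\tan(\omega/2)=\sec\omega-1$ is exactly the paper's $\tau=1/\cos\omega$), but as written it does not yet deliver the claimed remainder.
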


Our proof is in the spirit of the scheme developed by Helffer and Sj\"ostrand for the semiclassical analysis
of the multiple well problem~\cite{hs1,bhbook}. In Section~\ref{prel} we introduce the necessary tools
and establish some basic properties of the Robin Laplacians in polygons.
Section~\ref{spl} is devoted to the proof of Theorem~\ref{thm1}.
In Section~\ref{rmk} we discuss possible generalizations and variants.
In Appendix~\ref{1d} we study the one-dimensional Robin problem which is used to
obtain a more precise result for the case $\omega=\frac\pi 2$.

\begin{figure}[t]
\centering
\myimage{40mm}{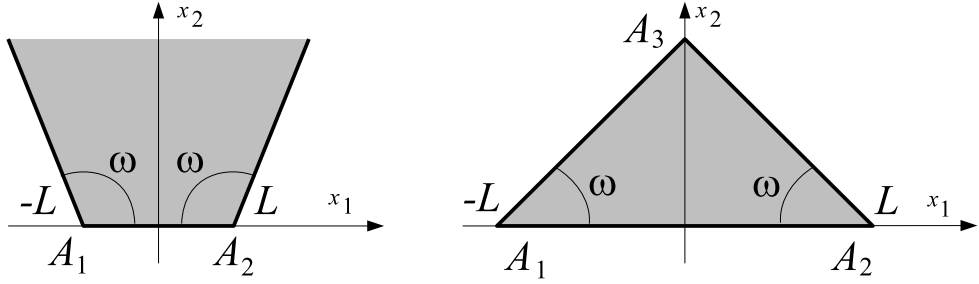}
\caption{The domain $\Omega_L$ for $\omega\ge\dfrac{\pi}{2}$ (left)
and $\omega<\dfrac{\pi}{2}$ (right).}\label{fig2}
\end{figure}

\bigskip

\noindent {\bf Acknowledgments.} The research was partially supported by ANR NOSEVOL and GDR Dynamique quantique. Bernard Helffer is also associated with the laboratoire Jean Leray at the university of Nantes.

\section{Preliminaries}\label{prel}

\subsection{Basic tools in  functional analysis}\label{ss21}

Recall the max-min principle for the self-adjoint operators.

\begin{prop}\label{maxmin}
Let $A$ be a lower semibounded self-adjoint operator
in a Hilbert space $\cH$, and let $E:=\inf\spec_\mathrm{ess} A$. For $n\in \NN$ consider the quantities
\[
E_n:=\sup_{\psi_1,\dots,\psi_{n-1}\in\cH} \inf_{\substack{u\in D(A),\, u\ne 0\\u\perp \psi_1,\dots,\psi_{n-1}}}\dfrac{\langle u,Au\rangle}{\langle u, u\rangle}.
\]
If $E_n<E$, then $E_n$ is the $n$th eigenvalue of $A$ (if numbered in the non-decreasing order
and counted with multiplicities).
Furthermore, one obtains an equivalent definition of $E_n$ by setting
\[
E_n:=\sup_{\psi_1,\dots,\psi_{n-1}\in\cH} \inf_{\substack{u\in Q(A),\, u\ne 0\\u\perp \psi_1,\dots,\psi_{n-1}}}\dfrac{a(u,u)}{\langle u, u\rangle},
\]
where $Q(A)$ is the form domain of $A$ and $a$ is the associated bilinear form.
\end{prop}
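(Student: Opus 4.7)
\textbf{Proof proposal for Proposition \ref{maxmin}.} The plan is twofold: (i) identify $E_n$ with the $n$-th discrete eigenvalue of $A$ (counted with multiplicity) whenever $E_n<E$, and (ii) verify that the sup--inf expression is unchanged when the infimum is taken over $u\in Q(A)$ instead of $u\in D(A)$. Both parts rest on the spectral resolution $A=\int\lambda\,dP_\lambda$ together with the fact that, since $E=\inf\spec_{\mathrm{ess}}A$, every spectral projector $P_{(-\infty,\lambda]}$ with $\lambda<E$ has finite rank, with range decomposing as an orthogonal sum of eigenspaces.

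For (i), I fix any $\lambda$ with $E_n<\lambda<E$ and let $\phi_1,\dots,\phi_m$ be an orthonormal eigenbasis of $P_{(-\infty,\lambda]}\cH$, with associated eigenvalues $\lambda_1\le\cdots\le\lambda_m$. I first show $m\ge n$ by contradiction: if $m<n$, choose $\psi_j=\phi_j$ for $j\le m$ and arbitrary $\psi_j\in\cH$ for $m<j\le n-1$; any admissible $u$ then lies in $P_{(\lambda,\infty)}\cH$, so $\langle u,Au\rangle\ge\lambda\|u\|^2$ by the functional calculus, yielding $E_n\ge\lambda>E_n$, a contradiction. Testing next with $\psi_j=\phi_j$ for $j=1,\dots,n-1$ gives the bound $E_n\ge\lambda_n$, since for $u\in D(A)$ orthogonal to these $\phi_j$ the spectral theorem forces $\langle u,Au\rangle\ge\lambda_n\|u\|^2$. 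Conversely, for every tuple $(\psi_j)$ the $n$-dimensional subspace $\linspan(\phi_1,\dots,\phi_n)$ meets $\{\psi_1,\dots,\psi_{n-1}\}^\perp$ nontrivially; any vector $u=\sum_k c_k\phi_k$ in that intersection has Rayleigh quotient $\sum_k\lambda_k|c_k|^2/\sum_k|c_k|^2\le\lambda_n$, which gives $E_n\le\lambda_n$.

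For (ii), the $Q(A)$-version of $E_n$ is trivially $\le$ the $D(A)$-version, as enlarging the set over which the infimum is taken can only decrease the infimum. For the reverse direction, I use that $D(A)$ is dense in $Q(A)$ for the form norm $\|\cdot\|_a=(a(\cdot,\cdot)+c\|\cdot\|^2)^{1/2}$ with $c$ large enough, and that by density of $Q(A)$ in $\cH$ and continuity in the $\psi_j$ one may assume $\psi_1,\dots,\psi_{n-1}\in Q(A)$. Given a near-minimizer $u\in Q(A)$ orthogonal to the $\psi_j$, I approximate it by $u_k\in D(A)$ in the form norm and then apply the orthogonal projection $\Pi$ onto $\linspan(\psi_j)^\perp$; since $\Pi=I-\sum_j\langle\cdot,e_j\rangle e_j$ for an orthonormalization $\{e_j\}\subset Q(A)$ of the $\psi_j$, the operator $\Pi$ is a finite-rank perturbation of the identity bounded on $Q(A)$, and $\Pi u_k\to u$ in form norm. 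The corresponding Rayleigh quotients on the $D(A)$ side (after a further mollification if needed to stay in $D(A)$) converge to $a(u,u)/\|u\|^2$, yielding the required inequality.

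The main obstacle is the contradiction step in (i), which hinges on the standard but non-trivial fact that $\spec A\cap(-\infty,E)$ is purely discrete and that $P_{(-\infty,\lambda]}$ has finite rank for every $\lambda<E$; these must be invoked carefully from the definition of the essential spectrum. A more delicate secondary point is the handling of (ii): the projection $\Pi$ maps $Q(A)$ into $Q(A)$ but not necessarily $D(A)$ into $D(A)$, so one may either approximate $\Pi u_k$ by an element of $D(A)$ in the form norm via a separate density argument, or else phrase the whole computation at the level of the quadratic form throughout and recover the $D(A)$-statement by density at the end.
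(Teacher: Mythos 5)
First, a point of comparison: the paper offers no proof of Proposition~\ref{maxmin} at all — it is recalled as a standard fact from the literature — so there is nothing in the text to measure your argument against; I can only assess it on its own terms.

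Your part (i) is the standard min-max argument and is correct: the finite rank of $P_{(-\infty,\lambda]}$ for $\lambda<E$, the contradiction step forcing $m\ge n$, the lower bound $E_n\ge\lambda_n$ from the tuple $(\phi_1,\dots,\phi_{n-1})$ (which implicitly uses the decomposition $u=u_1+u_2$ with $u_1\in\linspan(\phi_n,\dots,\phi_m)$, $u_2\in\operatorname{ran}P_{(\lambda,\infty)}$ and $\lambda\ge\lambda_n$ — worth writing out), and the dimension-count upper bound are all sound.

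Part (ii), however, has a genuine gap in the route you take as primary. The reduction to $\psi_1,\dots,\psi_{n-1}\in Q(A)$ ``by density of $Q(A)$ in $\cH$ and continuity in the $\psi_j$'' is not justified: the map $\psi\mapsto\inf_{u\perp\psi}a(u,u)/\|u\|^2$ is \emph{not} continuous in the $\cH$-topology. Restoring orthogonality after perturbing $\psi_j$ requires adding a multiple of $\psi_j$ to $u$, a correction that is small in $\cH$-norm but can have infinite (or arbitrarily large) form energy precisely when $\psi_j\notin Q(A)$; so the infimum can jump. The subsequent projection argument then inherits this problem, since $\Pi$ is bounded on $Q(A)$ only when the orthonormalized $e_j$ lie in $Q(A)$. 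The fix is the alternative you mention only in passing, and it makes the whole density apparatus unnecessary: rerun part (i) verbatim at the form level. The lower bound on the sup uses only the specific tuples $\phi_1,\dots,\phi_{n-1}$ (eigenvectors, hence in $D(A)\subset Q(A)$) together with the inequality $a(u,u)\ge\lambda\|u\|^2$ for $u\in Q(A)\cap\operatorname{ran}P_{(\lambda,\infty)}$, which holds by the functional calculus for forms; the upper bound holds for \emph{arbitrary} tuples $\psi_j\in\cH$ because the test vectors are taken in the finite-dimensional space $\linspan(\phi_1,\dots,\phi_n)\subset D(A)$. Both versions then equal $\lambda_n$ outright, and no approximation of $u$ or of the $\psi_j$ is needed.
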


Let $\cH$ be a Hilbert space. For a closed subspace $L$ of $\cH$, we denote by $P_L$
the orthogonal projector on $L$ in $\cH$. For an ordered pair $(E,F)$ of closed subspaces
$E$ and $F$ of $\cH$ we define
\[
d(E,F)=\|P_E-P_FP_E\|\equiv \|P_E-P_EP_F\|.
\]
The following proposition summarizes some essential properties, cf. \cite[Lemma 1.3 and Proposition 1.4]{hs1}:
\begin{prop}\label{prop8}
The distance between subspaces has the following properties:
\begin{enumerate}
\item $d(E,F)=0$ if and only if $E\subset F$,
\item $d(E,G)\le d(E,F)+d(E,G)$ for any closed subspace $G$ of $\cH$,
\item if $d(E,F)<1$, then then the map $E\ni f\mapsto P_F f\in F$ is injective, and the map
$F\ni f\mapsto P_E f\in E$ has a continuous right inverse,
\item If $d(E,F)<1$ and $d(F,E)<1$, then $d(E,F)=d(F,E)$, the map
$F\ni f\mapsto P_E f\in E$ is bijective, and its inverse is continuous.
\end{enumerate}
\end{prop}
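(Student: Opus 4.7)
Parts (1) and (2) I would dispatch by direct algebra. For (1), $d(E,F)=0$ means $P_FP_E=P_E$ as operators on $\cH$; applied to $f\in E$ this gives $P_Ff=f$, so $f\in F$, and the converse is immediate since for $E\subset F$ one has $P_E g\in F$ for every $g$, whence $P_FP_E g=P_E g$. Reading the inequality in (2) as the triangle inequality $d(E,G)\le d(E,F)+d(F,G)$ (there appears to be a typo in the printed statement), I would split
\[
(I-P_G)P_E=(I-P_G)P_FP_E+(I-P_G)(I-P_F)P_E
\]
and estimate the two summands by $d(F,G)$ and $d(E,F)$ respectively, using $\|P_E\|\le 1$ and $\|I-P_G\|\le 1$.

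The first half of (3) rests on the identity $f-P_Ff=P_Ef-P_FP_Ef$ valid for $f\in E$, which yields $\|f-P_Ff\|\le d(E,F)\|f\|$; the Pythagorean relation $\|f\|^2=\|P_Ff\|^2+\|f-P_Ff\|^2$ then gives $\|P_Ff\|\ge\sqrt{1-d(E,F)^2}\,\|f\|$, proving injectivity and that the map has closed range. For the continuous right inverse of $F\ni f\mapsto P_Ef\in E$, I would consider $S:=(P_EP_F)|_E$ as an operator on $E$; composing the above estimate with $P_E$ gives $\|e-Se\|\le d(E,F)\|e\|$ for $e\in E$, so $\|I_E-S\|<1$, and $S$ is boundedly invertible on $E$ via Neumann series with $\|S^{-1}\|\le(1-d(E,F))^{-1}$. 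Setting $R(e):=P_FS^{-1}e\in F$ one checks $P_ER(e)=S(S^{-1}e)=e$, which is the desired continuous right inverse.

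Part (4) combines these ingredients. Injectivity of $F\ni f\mapsto P_Ef\in E$ follows from the first half of (3) with $E$ and $F$ exchanged (using $d(F,E)<1$), and a continuous right inverse exists by the second half of (3) (using $d(E,F)<1$); an injective continuous map that possesses a continuous right inverse is automatically a bijection whose (unique) inverse coincides with that right inverse, so is continuous. The main obstacle, as I see it, is the symmetry $d(E,F)=d(F,E)$. I would establish it spectrally via
\[
d(E,F)^2=\|(I-P_F)P_E\|^2=\|P_E(I-P_F)P_E\|=1-\inf\sigma\bigl(P_EP_FP_E|_E\bigr),
\]
the last equality because $P_EP_FP_E|_E$ is self-adjoint on $E$ with spectrum in $[0,1]$, and the assumption $d(E,F)<1$ forces this infimum to be strictly positive. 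The analogous identity holds for $d(F,E)^2$ after swapping the roles of $E$ and $F$. The classical fact that $AB$ and $BA$ share nonzero spectrum, applied to $A=P_EP_F$ and $B=P_FP_E$, yields $\sigma(P_EP_FP_E)\setminus\{0\}=\sigma(P_FP_EP_F)\setminus\{0\}$; since under the hypotheses of (4) both infima are strictly positive, both coincide with the minimum of this common nonzero spectrum, giving $d(E,F)=d(F,E)$.
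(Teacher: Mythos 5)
Your proof is correct in all four parts, including the observation that item (2) as printed contains a typo and should read $d(E,G)\le d(E,F)+d(F,G)$. Note, however, that the paper offers no proof of this proposition at all: it is quoted from Helffer--Sj\"ostrand \cite{hs1} (Lemma 1.3 and Proposition 1.4 there), so there is nothing in the text to compare against line by line. Relative to the argument in \cite{hs1}, your treatment of (1)--(3) is essentially the standard one (the estimate $\|P_Ff\|^2\ge(1-d(E,F)^2)\|f\|^2$ for $f\in E$ and the Neumann-series inversion of $P_EP_F|_E$). Where you genuinely diverge is the symmetry $d(E,F)=d(F,E)$ in (4): you derive it spectrally from $d(E,F)^2=1-\inf\sigma\bigl(P_EP_FP_E|_E\bigr)$ together with the identity $\sigma(AB)\setminus\{0\}=\sigma(BA)\setminus\{0\}$ applied to $A=P_EP_F$, $B=P_FP_E$; the original source argues more operator-theoretically through the bijectivity of the restricted projections. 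Your route is clean and self-contained; the one small point worth making explicit is that $\sigma(P_EP_FP_E)$ on all of $\cH$ contains $0$ whenever $E\ne\cH$, so one must pass to the restriction to $E$ (where the hypothesis $d(E,F)<1$ pushes the spectrum away from $0$) before identifying the two nonzero spectra --- you do say this, and it closes the argument.
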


The following proposition can be used to estimate $d(E,F)$, see e.g.~\cite[Proposition 3.5]{hs1}.

\begin{prop}\label{prop9}
Let $A$ be a self-adjoint operator in $\cH$, $I\subset \RR$ be a compact interval,
$\psi_1,\dots,\psi_n\in D(A)$ be linearly independent, and $\mu_1,\dots,\mu_n\in\RR\,$.
Denote:
\begin{align*}
\varepsilon&:=\max_{j\in\{1,\dots,n\}} \big\|(A-\mu_j)\psi_j\big \|\,,\\
a&:=\frac12 \dist\big(I,(\spec A)\setminus I\big)\,,\\[\medskipamount]
\Lambda&:= \text{ the smallest eigenvalue of the Gramian matrix } \big(\langle\psi_j,\psi_k\rangle\big)\,.
\end{align*}
Let $E$ be the subspace spanned by $\psi_1,\dots,\psi_n$ and $F$ be the spectral subspace
associated with $A$ and $I$. If $a>0\,$, then
\begin{equation}
    \label{eq-def}
d(E,F)\le \dfrac{1}{a}\sqrt{\dfrac{n}{\Lambda}}\,\varepsilon\,.
\end{equation}
\end{prop}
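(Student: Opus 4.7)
The plan is to reduce $d(E,F)=\|(I-P_F)P_E\|$ to a per-quasimode spectral bound, and then assemble via the Gramian. Using that $P_E$ is the orthogonal projection onto $\linspan(\psi_1,\dots,\psi_n)$, one has
$$d(E,F) = \|(I-P_F)P_E\| = \sup_{\phi\in E,\ \|\phi\|=1}\|(I-P_F)\phi\|,$$
so it suffices to estimate $\|(I-P_F)\phi\|$ for an arbitrary unit vector $\phi=\sum_j c_j\psi_j\in E$.

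\emph{Step 1 (quasimode estimate).} By the spectral theorem, if $\{E_\lambda\}$ denotes the spectral family of $A$, then
$$\|(I-P_F)\psi_j\|^2 = \int_{\spec A\,\setminus\, I} d\langle\psi_j,E_\lambda\psi_j\rangle.$$
The hypothesis $a>0$, together with the $\mu_j$ lying in (or sufficiently close to) $I$, ensures that $|\lambda-\mu_j|\ge 2a$ for every $\lambda\in\spec A\setminus I$; multiplying the integrand by $(\lambda-\mu_j)^2/(2a)^2\ge 1$ therefore yields
$$\|(I-P_F)\psi_j\|\le\frac{1}{2a}\|(A-\mu_j)\psi_j\|\le\frac{\varepsilon}{2a}.$$
The factor-of-two slack between $1/(2a)$ here and $1/a$ in the proposition is precisely what permits $\mu_j$ to sit outside $I$ at distance up to $a$.

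\emph{Step 2 (assembly via the Gramian).} For $\phi=\sum_j c_j\psi_j$ with $\|\phi\|=1$, the triangle inequality and Cauchy--Schwarz in $\CC^n$ give
$$\|(I-P_F)\phi\|\le\sum_{j=1}^n|c_j|\,\|(I-P_F)\psi_j\|\le\|c\|\Bigl(\sum_{j=1}^n\|(I-P_F)\psi_j\|^2\Bigr)^{1/2}\le\|c\|\sqrt{n}\,\frac{\varepsilon}{2a}.$$
Writing $G=(\langle\psi_j,\psi_k\rangle)$ for the Gramian, the identity $1=\|\phi\|^2=c^{\ast}Gc\ge\Lambda\|c\|^2$ gives $\|c\|\le 1/\sqrt{\Lambda}$, hence $\|(I-P_F)\phi\|\le(2a)^{-1}\sqrt{n/\Lambda}\,\varepsilon$. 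Taking the supremum over $\phi\in E$ of unit norm yields the claimed bound \eqref{eq-def} (in fact with a factor of $2$ to spare). The only genuinely substantive step is the per-quasimode estimate, which hinges on the spectral gap encoded by $a>0$; Step~2 is pure linear algebra.
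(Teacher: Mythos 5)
Your argument is correct and is essentially the standard one: the paper does not reproduce a proof but simply cites Proposition~3.5 of Helffer--Sj\"ostrand, whose proof is exactly this combination of a per-quasimode spectral-theorem estimate $\|(I-P_F)\psi_j\|\le \|(A-\mu_j)\psi_j\|/\dist(\mu_j,\spec A\setminus I)$ with the Gramian lower bound $c^{*}Gc\ge\Lambda\|c\|^{2}$. You are also right to flag the one subtlety: the inequality $|\lambda-\mu_j|\ge 2a$ (or $\ge a$) requires $\mu_j$ to lie in $I$, or within distance $a$ of it --- a hypothesis omitted from the statement as printed but genuinely needed (otherwise the claim fails, e.g.\ for $\psi_j$ an exact eigenvector with eigenvalue outside $I$), and satisfied in every application the paper makes of the proposition.
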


\subsection{Robin Laplacians in infinite sectors}\label{ss-sa}

For $\alpha \in(0,\pi)$, we define 
\[
S_{\alpha}:=\big\{
(x_1,x_2)\in\RR^2: \quad \big|\arg(x_1+ix_2)\big|<\alpha
\big\}
\]
and consider the associated Robin Laplacian and the bottom of its spectrum:
\[
H_{\alpha}=H(S_\alpha,\beta)\,, \quad
E_{\alpha}:=\inf\spec H_{\alpha}\,.
\]
The following result is essentially contained in~\cite{lp}:
\begin{prop}\label{prop-seceig}
The operator $H_\alpha$ has the following properties:
\begin{itemize}
\item If $\alpha<\frac{\pi}{2}$, then
\begin{equation}\label{defEalpha}
E_{\alpha}=-\dfrac{\beta^2}{\sin^2\alpha},
\end{equation}
and this point is a simple isolated eigenvalue of $\spec H_{\alpha}$
with the associated normalized eigenfunction
\begin{equation}\label{defualpha}
U_\alpha(x_1,x_2)=\beta\sqrt{\dfrac{2\cos\alpha}{\sin^3\alpha}}\exp\Big(-\dfrac{\beta}{\sin\alpha}x_1\Big)\,.
\end{equation}
\item If $\alpha\ge\frac\pi 2$, then $E_{\alpha}=-\beta^2$ and $\spec H_{\alpha}=[E_{\alpha},+\infty)\,$.
\end{itemize}
\end{prop}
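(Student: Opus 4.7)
The plan is to split the argument by the two cases $\alpha<\pi/2$ and $\alpha\ge\pi/2$, relying on a direct verification of the candidate eigenfunction in the first case and on Weyl-sequence plus trace-inequality arguments for the remaining identifications.

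For $\alpha<\pi/2$, I would first check by direct computation that $U_\alpha$ satisfies the eigenvalue equation: since $U_\alpha$ depends only on $x_1$, one has $-\Delta U_\alpha=-\partial_{x_1}^2 U_\alpha=-\beta^2\sin^{-2}\alpha\cdot U_\alpha$. The outward unit normals on the two edges of $\partial S_\alpha$ are $(-\sin\alpha,\pm\cos\alpha)$, so $\partial_n U_\alpha=-\sin\alpha\cdot\partial_{x_1}U_\alpha=\beta U_\alpha$. Integrability $U_\alpha\in L^2(S_\alpha)$ follows from $\cos\alpha>0$, and the normalization constant is recovered by an elementary Cartesian integration over $S_\alpha$. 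To confirm that $E_\alpha$ is the bottom of the spectrum and is simple, I would use a ground-state substitution: for $u\in Q(H_\alpha)$, set $w:=u/U_\alpha$, which is meaningful since $U_\alpha>0$ on $\overline{S_\alpha}$. Expanding $|\nabla(U_\alpha w)|^2$ and integrating by parts, the eigenvalue equation and Robin condition satisfied by $U_\alpha$ yield the identity
\[
h_\alpha(u,u)-E_\alpha\|u\|^2=\int_{S_\alpha}U_\alpha^2|\nabla w|^2\,dx\geq 0,
\]
with equality if and only if $w$ is constant. This simultaneously proves $\inf\spec H_\alpha=E_\alpha$ and the simplicity of $E_\alpha$; the integration by parts is first done on a dense set of smooth test functions and extended by density.

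To see that $E_\alpha$ is isolated, I would invoke Persson's characterization of $\inf\spec_\text{ess}H_\alpha$: for functions supported outside a large ball around the corner, introduce a partition of unity that separates the two boundary rays. Each piece is localized near a single ray, which looks locally like a half-plane boundary, and the one-dimensional trace inequality $\beta|u(0)|^2\leq\|u'\|_{L^2(\RR_+)}^2+\beta^2\|u\|_{L^2(\RR_+)}^2$ (from $|u(0)|^2=-2\int_0^\infty u u'\,dt$ and Cauchy--Schwarz) then gives $\inf\spec_\text{ess}H_\alpha\geq-\beta^2$. Since $E_\alpha=-\beta^2/\sin^2\alpha<-\beta^2$ in this regime, $E_\alpha$ is isolated.

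For $\alpha\geq\pi/2$, the upper bound $\inf\spec H_\alpha\leq-\beta^2$ and the full inclusion $[-\beta^2,+\infty)\subset\spec_\text{ess}H_\alpha$ follow from a Weyl sequence: translating cutoffs of $e^{-\beta x_1}e^{i\xi x_2}$ (for any $\xi\in\RR$) far along the positive $x_1$-axis produces sequences supported inside $S_\alpha$ and away from $\partial S_\alpha$ (which is possible precisely because $\alpha\geq\pi/2$), with Rayleigh quotient converging to $-\beta^2+\xi^2$. The matching lower bound $H_\alpha\geq-\beta^2$ is again obtained by combining Persson with the trace estimate, now applied globally via an IMS-type partition of unity subordinated to two half-plane-like regions adapted to the two edges. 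The most delicate step, and the main obstacle, is verifying that this partition reproduces the Robin form on $\partial S_\alpha$ exactly once while keeping the commutator errors from the cutoffs under control; the absence of an $L^2$ corner state blocks a direct ground-state transformation, and for $\alpha>\pi/2$ the reflex corner makes the gluing of the two half-plane pieces subtle. For these details I would rely on the geometric analysis already carried out in \cite{lp}.
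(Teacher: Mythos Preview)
The paper does not actually prove this proposition; it simply records it as ``essentially contained in~\cite{lp}'' and moves on. So there is no proof in the paper to compare against, and your sketch is already more than what the authors supply.

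Your treatment of the case $\alpha<\pi/2$ is sound: the direct verification of the eigenfunction and the ground-state substitution $u=U_\alpha w$ are exactly the right tools, and the Persson argument for $\inf\spec_\text{ess}\ge-\beta^2$ is standard. One small caveat: in the density step for the ground-state identity, $1/U_\alpha$ blows up as $x_1\to+\infty$, so you should either restrict to compactly supported $u$ first (which is enough for the lower bound by density) or justify the substitution more carefully for general $u\in H^1(S_\alpha)$.

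There is, however, a genuine error in your Weyl sequence for $\alpha\ge\pi/2$. You propose translating cutoffs of $e^{-\beta x_1}e^{i\xi x_2}$ along the positive $x_1$-axis so that they are supported \emph{away from} $\partial S_\alpha$. But then the boundary term in the quadratic form vanishes and the form reduces to $\int|\nabla u|^2$; for your profile this gives a Rayleigh quotient close to $\beta^2+\xi^2$, not $-\beta^2+\xi^2$. Interior Weyl sequences can only reach $[0,+\infty)$. To hit $-\beta^2+\xi^2$ you must keep the test functions attached to one edge of $\partial S_\alpha$, far from the vertex: take the half-plane Robin ground state $e^{-\beta d}$ (with $d$ the distance to that edge), multiply by a tangential phase $e^{i\xi s}$ and a cutoff sliding to infinity along the edge. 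This sequence does feel the boundary term and yields the correct value $-\beta^2+\xi^2$. With this correction your outline for the second case is fine, and you rightly defer the remaining partition-of-unity details to~\cite{lp}.
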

In what follows we will use another associated quantity,
\begin{equation}
      \label{eq-lba}
\Lambda_{\alpha}:=\inf (\spec H_{\alpha})\setminus\{E_{\alpha}\}.
\end{equation}
In view of Proposition~\ref{prop-seceig} we have:
\begin{itemize}
\item if $\alpha<\frac\pi 2$, then $\Lambda_{\alpha}>E_{\alpha}$. In this case, if
one denotes by $P_{\alpha}$ the orthogonal projection in $L^2(S_\alpha)$
onto the subspace spanned by $U_{\alpha}$, then the spectral theorem implies
\begin{equation} \label{eq-hproj}
\langle u, H_\alpha u\rangle\ge \Lambda_{\alpha}\|u\|^2+(E_{\alpha}-\Lambda_{\alpha})\big\langle u, P_{\alpha} u \big\rangle
\text{ for all } u\in D(H_{\alpha})\,,
\end{equation}
\item if $\alpha\ge\frac\pi 2$, then $\Lambda_{\alpha}=E_{\alpha}\,$.
\end{itemize}

\subsection{Robin Laplacians in convex polygons}\label{poly}

In this subsection, let $\Omega_1\subset\RR^2$ be a convex polygonal domain, i.e. is the intersection 
of finitely many half-planes. Assume that $\Omega_1$ has $N$ vertices
$B_1,\dots, B_N$, and the corner opening at $B_j$ will be denoted by $2\alpha_j$.
We assume that all vertices are non-trivial, which means, due to the convexity, that
 $\alpha_j\in (0,\frac\pi2)$ for all $j$.
Define 
\[
\alpha:=\min_j\alpha_j\,.
\]
Furthermore, we set $\Omega_L:=L\Omega_1$ for  some $L>0$ and denote by  $A_j:=LB_j$
the vertices of $\Omega_L $. We omit sometimes the reference to $L$ and write more  simply $\Omega\,$.
 Finally, let us pick some $\beta>0$ and consider the associated Robin Laplacian $H:=H(\Omega,\beta)$.
Strictly speaking, $H$ is the operator associated with the bilinear form
\[
h_{\Omega,\beta}(u,u)=\iint_\Omega |\nabla u|^2\,dx-\beta \int_{\partial\Omega}|u|^2\,ds\,,
\quad u\in H^1(\Omega)\,,
\]
where $ds$ means the integration with respect to the length parameter.
Using the standard methods we have 
\[
\inf\spec_\text{ess} H\ge -\beta^2\,.
\]
The following proposition is a particular case of a more general result proved in~\cite{lp}:
\begin{prop}\label{prop-lp}
$\lim_{L\to+\infty} \inf\spec H= -\dfrac{\beta^2}{\sin^2\alpha}\equiv E_\alpha$.
\end{prop}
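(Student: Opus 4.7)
The proposition asserts a convergence $\inf\spec H(\Omega_L,\beta)\to E_\alpha$ where $E_\alpha$ is the bottom of the spectrum of the Robin Laplacian on the infinite sector with half-opening $\alpha$. I would prove this by establishing the upper and lower bounds separately, using the form characterization from Proposition~\ref{maxmin}.

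\textbf{Upper bound.} For the inequality $\limsup_{L\to+\infty}\inf\spec H\le E_\alpha$, I would produce a trial function concentrated at the sharpest corner. Let $B_{j_0}$ be a vertex of $\Omega_1$ with half-opening $\alpha=\alpha_{j_0}$, and identify a neighborhood of the corresponding vertex $A_{j_0}=L B_{j_0}$ of $\Omega_L$ with an infinite sector $S_\alpha$ of half-opening $\alpha$ via a rigid motion. Take a smooth cutoff $\chi_L$ equal to $1$ on a ball of radius, say, $L^{1/2}$ around $A_{j_0}$ and supported in a ball of radius $L^{2/3}$ contained in $\Omega_L$, with $\|\nabla\chi_L\|_\infty=O(L^{-1/2})$. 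Using the eigenfunction $U_\alpha$ from \eqref{defualpha}, set $u_L:=\chi_L U_\alpha$. Since $U_\alpha$ decays exponentially away from the vertex at a rate depending only on $\beta$ and $\alpha$, both $\|u_L\|^2$ converges to $1$ and
\[
h_{\Omega_L,\beta}(u_L,u_L)=E_\alpha\|u_L\|^2+O\bigl(e^{-cL^{1/2}}\bigr),
\]
the error absorbing the commutator term $\||\nabla\chi_L|\,U_\alpha\|^2$. Feeding $u_L$ into Proposition~\ref{maxmin} gives the upper bound.

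\textbf{Lower bound via IMS localization.} For $\liminf_{L\to+\infty}\inf\spec H\ge E_\alpha$, I would build a quadratic partition of unity adapted to the vertices. Choose smooth functions $\chi_0,\chi_1,\dots,\chi_N$ on $\Omega_L$ with $\sum_j \chi_j^2=1$, where $\chi_j$ (for $j\ge1$) is supported in a ball $B(A_j,r_L)$ that, inside $\Omega_L$, coincides with a translated/rotated infinite sector of half-opening $\alpha_j$, and $\chi_0$ is supported away from all vertices so that on $\supp\chi_0$ the boundary $\partial\Omega_L\cap\supp\chi_0$ consists of disjoint straight pieces. I would take $r_L=L^{1/2}$, so that the balls are disjoint for large $L$ and $\sum_j|\nabla\chi_j|^2=O(L^{-1})$. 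The IMS identity gives, for any $u$ in the form domain,
\[
h_{\Omega_L,\beta}(u,u)=\sum_{j=0}^{N}h_{\Omega_L,\beta}(\chi_j u,\chi_j u)-\sum_{j=0}^{N}\bigl\||\nabla\chi_j|\,u\bigr\|^2.
\]
The error term is bounded by $C L^{-1}\|u\|^2$. For each $j\ge 1$, $\chi_j u$ is supported in a sectorial region that embeds isometrically into $S_{\alpha_j}$; extending by zero and applying the variational characterization of $E_{\alpha_j}=\inf\spec H_{\alpha_j}$ gives $h_{\Omega_L,\beta}(\chi_j u,\chi_j u)\ge E_{\alpha_j}\|\chi_j u\|^2\ge E_\alpha\|\chi_j u\|^2$, since $\alpha_j\mapsto -\beta^2/\sin^2\alpha_j$ is minimized at $\alpha_j=\alpha$. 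For $j=0$, on $\supp\chi_0$ the domain locally looks like a half-plane, whose Robin Laplacian has infimum $-\beta^2$; since $\alpha\le \pi/2$ gives $-\beta^2\ge -\beta^2/\sin^2\alpha=E_\alpha$, we again obtain $h_{\Omega_L,\beta}(\chi_0 u,\chi_0 u)\ge E_\alpha\|\chi_0 u\|^2$. Summing and using $\sum\|\chi_j u\|^2=\|u\|^2$ yields $h_{\Omega_L,\beta}(u,u)\ge (E_\alpha-CL^{-1})\|u\|^2$, hence the lower bound.

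\textbf{Main obstacle.} The upper bound is routine once one trusts the exponential decay of $U_\alpha$. The delicate step is the lower bound near the boundary pieces for $j=0$: one must ensure that $\supp\chi_0\cap\partial\Omega_L$ is genuinely a union of flat pieces so that the variational bound by the half-plane Robin problem applies. This requires keeping the support of $\chi_0$ away from the vertices at a distance that is both large enough for the cutoff gradient to be small ($\gg 1$) and compatible with the geometry of $\Omega_L$ (which is easy because the sides have length growing like $L$). Once the scale $r_L$ is chosen so that $r_L\to\infty$ but $r_L/L\to 0$, the geometric constraints are satisfied for $L$ large enough and both bounds match.
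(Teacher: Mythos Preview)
The paper does not actually prove this proposition: it is stated as a particular case of a general result of Levitin and Parnovski~\cite{lp}, with no independent argument given. Your proposal therefore goes further than the paper does here, supplying a real proof via IMS localization that is in the spirit of the arguments the paper uses later (in Proposition~\ref{lem-agmon} and Lemma~\ref{lem10}).

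Your upper bound is fine. For the lower bound, the treatment of the piece $\chi_0 u$ supported away from the vertices is the genuinely delicate step, and your ``main obstacle'' paragraph does not quite identify the real difficulty. The issue is not whether $\partial\Omega_L\cap\supp\chi_0$ consists of flat pieces---it automatically does, since $\Omega_L$ is a polygon---but that these pieces lie on \emph{several different} lines. Consequently $\chi_0 u$ cannot be extended by zero to a \emph{single} half-plane, and the inequality $h_{\Omega_L,\beta}(\chi_0 u,\chi_0 u)\ge -\beta^2\|\chi_0 u\|^2$ does not follow directly from the half-plane model as you claim. A clean fix is a second quadratic partition $\{\eta_k\}$ on $\supp\chi_0$, subordinate to the individual sides together with an interior piece supported away from $\partial\Omega_L$: on $\supp\chi_0$ two adjacent sides are separated by a distance $\gtrsim r_L$, so one can take $\sum_k|\nabla\eta_k|^2=O(r_L^{-2})$, the extra IMS error is harmless, and each $\eta_k\chi_0 u$ now genuinely extends to a single half-plane (or to $\RR^2$ for the interior piece, where the form is nonnegative). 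Alternatively, you could imitate the device the paper uses in the proofs of Proposition~\ref{lem-agmon} and Lemma~\ref{lem10}: embed $\supp\chi_0$ in a smooth dilated domain $LD\subset\Omega_L$ whose boundary agrees with $\partial\Omega_L$ on $\supp\chi_0$ and invoke $\inf\spec H(LD,\beta)\to -\beta^2$; but note that this already appeals to the smooth-domain case of the very statement you are proving, so the direct half-plane route with the extra partition is more self-contained.
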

To describe the domain of $H$, let us
recall first the Green-Riemann formula, which states that,
for $f\in H^1(\Omega)$ and $g\in H^2(\Omega)$, 
\begin{equation}
    \label{eq-gr}
\int_{\partial\Omega} f\, \dfrac{\partial g}{\partial n}\, ds
=\iint_{\Omega}\Big( f\Delta g+\nabla f\cdot\nabla g\Big)\,dx\,,
\end{equation}
where $n$ is the outward unit normal.

\begin{prop}\label{prop-dom}
There holds
\begin{equation}
     \label{eq-dh}
D(H)=\big\{u\in H^2(\Omega): \dfrac{\partial u}{\partial n}=\beta u \text{ at } \partial\Omega\big\}
\end{equation}
and $Hu=-\Delta u$ for all $u\in D(H)$.
\end{prop}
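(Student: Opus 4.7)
The plan is to establish the two inclusions in \eqref{eq-dh} separately. The inclusion $\supset$ is the easier direction and follows from a direct integration by parts. Given any $u \in H^2(\Omega)$ satisfying $\partial u/\partial n = \beta u$ at $\partial\Omega$ in the trace sense, one applies the Green-Riemann identity \eqref{eq-gr} with $g = u$ and an arbitrary $f = v \in H^1(\Omega)$. After substituting the boundary condition in the left-hand side, one obtains $h_{\Omega,\beta}(u,v) = \langle -\Delta u,\, v\rangle_{L^2(\Omega)}$ for every $v \in H^1(\Omega)$. Since $-\Delta u \in L^2(\Omega)$, this identifies $u$ as an element of $D(H)$ with $Hu = -\Delta u$.

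For the reverse inclusion, fix $u \in D(H)$. By definition of the form-associated operator, there exists $f \in L^2(\Omega)$ such that $h_{\Omega,\beta}(u, v) = \langle f, v\rangle$ for every $v \in H^1(\Omega)$. Testing first against $v \in C_c^\infty(\Omega)$ identifies $-\Delta u = f$ in the sense of distributions, so in particular $\Delta u \in L^2(\Omega)$. The remaining task is twofold: promote $u$ to $H^2(\Omega)$, and then recover the Robin boundary condition pointwise on $\partial\Omega$. Once the membership $u \in H^2(\Omega)$ is established, a second application of the Green-Riemann formula combined with the variational identity above shows that the expression $\partial u/\partial n - \beta u$ is orthogonal in $L^2(\partial\Omega)$ to the trace of every $v \in H^1(\Omega)$, hence vanishes almost everywhere.

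The main obstacle is therefore to prove the $H^2$ regularity up to the boundary, and especially at the corners $A_j$. Interior regularity follows from the standard elliptic theory, and at a smooth point of $\partial\Omega$ the Robin condition is a zero-order perturbation of the Neumann condition, so the classical regularity theory for oblique-derivative problems yields the result. Near a corner $A_j$ of opening $2\alpha_j$, the convexity hypothesis $2\alpha_j < \pi$ is crucial: in the Kondratiev-Grisvard decomposition of weak solutions near the vertex, the critical exponents are positive multiples of $\pi/(2\alpha_j) > 1$, so no singular function enters and $u \in H^2$ in a neighborhood of $A_j$. This is a particular instance of Grisvard's regularity theorem for elliptic boundary-value problems on convex polygonal domains, which applies to the Robin case since the boundary operator differs from the Neumann one only by a lower-order term. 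A partition of unity subordinated to a covering by interior balls, smooth-boundary half-balls, and corner neighborhoods then assembles these local statements into the global membership $u \in H^2(\Omega)$, completing the proof.
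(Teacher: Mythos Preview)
Your proof is correct and rests on the same core idea as the paper's: the easy inclusion via the Green--Riemann formula, and the hard inclusion reduced to $H^2$-regularity at convex corners using Grisvard's theory. The only tactical difference is in how the regularity step is executed. You invoke Grisvard directly for the Robin problem, observing that the boundary operator differs from Neumann only by a zero-order term so the singular exponents $k\pi/(2\alpha_j)>1$ are unchanged. The paper instead reduces to the pure Neumann case: given $u\in D(H)$ it solves the auxiliary problem $-\Delta w = Hu$ in $\Omega$, $\partial w/\partial n = \beta\, u|_{\partial\Omega}$ on $\partial\Omega$, treating $\beta\,u|_{\partial\Omega}\in H^{1/2}(\partial\Omega)$ as \emph{given} Neumann data; Grisvard's Neumann regularity on convex polygons yields $w\in H^2(\Omega)$, and then $u-w$ is a variational solution of the homogeneous Neumann problem, hence constant. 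Both routes are standard within the Grisvard framework; the paper's detour has the mild advantage of citing only the classical Neumann statement, while your direct argument is shorter but requires knowing that the lower-order perturbation does not alter the corner exponents.
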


\begin{proof}
The claim follows from the general scheme developped for boundary value problems
in non-smooth domains~\cite{grisv}. We just explain briefly how this scheme appplies to the Robin
boundary condition. We note first that the associated form $h_{\Omega,\beta}$ is semibounded from below and closed
due to the standard Sobolev embedding theorems. We note then that for
any $u \in D(H)$ one  has $Hu=-\Delta u$ in  $\mathcal{D}'(\Omega)$.
Furthermore, if $\Tilde D$ is the set on the right-hand
side of \eqref{eq-dh}, then it easily follows from \eqref{eq-gr} that $\Tilde D\subset D(H)$.
It follows also that for $f\in H^2(\Omega)$ the inclusion
$f\in D(H)$ is equivalent to the equality $\partial f/\partial n=\beta f$ on $\partial\Omega\,$.
In view of these observations, it is sufficient to show that $D(H)\subset H^2(\Omega)$.

Take any $f\in D(H)\subset H^1(\Omega)$ and let $g:=H f\in L^2(\Omega)$.
All corners at the boundary of $\Omega$
are smaller than $\pi$, and the trace of $f$ on 
$\partial \Omega$ is in $H^\frac 12(\partial \Omega)$,
which means that there exists a solution $u\in H^2(\Omega)$
for the  boundary value problem:
\[
-\Delta u= g \text{ in }\Omega, \, \dfrac{\partial u}{\partial n}=\beta f \text{ on }\partial\Omega\,,
\]
see \cite[Section 2.4]{grisv} (we are in the case where no singular solutions are present).
On the other hand, $f$ is a variational solution
of the preceding problem. This means that the function $v:=f-u\in H^1(\Omega)$ becomes 
a variational solution to
\[
-\Delta v= 0 \text{ in } \mathcal{D}'(\Omega), \, \dfrac{\partial v}{\partial n}= 0
\text{ on }\partial\Omega\,.
\]
Again according to \cite[Section 2.4]{grisv} we conclude that the only possible solution
is constant, which means that $f=u+v\in H^2(\Omega)$.
\end{proof}

Now let us obtain some (Agmon-type) decay estimates of the eigenfunctions of $H$
corresponding to the lowest eigenvalues as $L\to +\infty$. Let us start with a technical
identity.

\begin{lemma}\label{lem8}
Let $u\in H^2(\Omega)$ be real-valued and satisfy the Robin boundary condition $\partial u/\partial n=
\beta u$ at $\partial \Omega$.
Furthermore, let
$\Phi:\Omega\to\RR$ be such that $\Phi,\nabla\Phi\in L^\infty(\Omega)$, then
\[
  \iint_{\Omega} \big|\nabla (e^\Phi u)\big|^2dx -\beta \int_{\partial\Omega} e^{2\Phi}u^2ds
  =\iint_{\Omega} e^{2\Phi} u(-\Delta u)dx+\iint_{\Omega} |\nabla\Phi|^2 e^{2\Phi}u^2dx.
\]
\end{lemma}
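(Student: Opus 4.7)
The plan is to derive the identity by two complementary manipulations: a direct expansion of $|\nabla(e^\Phi u)|^2$, and an application of Green--Riemann to extract a boundary term that can be eliminated using the Robin condition. Since $\Phi,\nabla\Phi\in L^\infty(\Omega)$ and $u\in H^2(\Omega)$, the weight $e^{2\Phi}u$ lies in $H^1(\Omega)$ with trace on $\partial\Omega$ equal to $e^{2\Phi}u|_{\partial\Omega}$, so all integrations by parts are justified.

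First I would compute, pointwise in $\Omega$,
\[
|\nabla(e^\Phi u)|^2 = e^{2\Phi}|\nabla u|^2 + 2e^{2\Phi} u\,\nabla u\cdot \nabla\Phi + e^{2\Phi}u^2|\nabla\Phi|^2,
\]
and integrate over $\Omega$. The strategy is now to rewrite the sum of the first two terms using the equation $-\Delta u$ and the boundary condition.

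Next I would apply the Green--Riemann formula \eqref{eq-gr} with $f:=e^{2\Phi}u\in H^1(\Omega)$ and $g:=u\in H^2(\Omega)$:
\[
\int_{\partial\Omega} e^{2\Phi}u\,\frac{\partial u}{\partial n}\,ds=\iint_{\Omega}\bigl(e^{2\Phi}u\,\Delta u+\nabla(e^{2\Phi}u)\cdot\nabla u\bigr)\,dx.
\]
Expanding $\nabla(e^{2\Phi}u)=e^{2\Phi}(\nabla u+2u\nabla\Phi)$ one obtains
\[
\iint_{\Omega}e^{2\Phi}|\nabla u|^2\,dx+2\iint_{\Omega}e^{2\Phi}u\,\nabla u\cdot\nabla\Phi\,dx=\int_{\partial\Omega}e^{2\Phi}u\,\frac{\partial u}{\partial n}\,ds+\iint_{\Omega}e^{2\Phi}u(-\Delta u)\,dx.
\]
Using the Robin condition $\partial u/\partial n=\beta u$ on $\partial\Omega$ turns the boundary integral into $\beta\int_{\partial\Omega}e^{2\Phi}u^2\,ds$. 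Substituting this into the pointwise expansion of $|\nabla(e^\Phi u)|^2$ integrated over $\Omega$ and rearranging yields exactly the claimed identity.

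There is no real obstacle here: the identity is algebraic once Green--Riemann is applied. The only point that deserves attention is the regularity of the test function $e^{2\Phi}u$, which is controlled by the hypothesis $\Phi,\nabla\Phi\in L^\infty(\Omega)$ ensuring $e^{2\Phi}u\in H^1(\Omega)$ and legitimising formula \eqref{eq-gr}.
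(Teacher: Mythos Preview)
Your proof is correct and follows essentially the same route as the paper: expand $|\nabla(e^\Phi u)|^2$ pointwise, then apply the Green--Riemann formula~\eqref{eq-gr} with test function $e^{2\Phi}u$ against $u$, and eliminate the boundary term via the Robin condition. The only cosmetic difference is that the paper first treats the case $\Phi\in C^2(\overline{\Omega})$ and then invokes regularization, whereas you go directly to the general case by observing that $\Phi,\nabla\Phi\in L^\infty(\Omega)$ already guarantees $e^{2\Phi}u\in H^1(\Omega)$, which is all that~\eqref{eq-gr} requires.
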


\begin{proof}
We just consider the case $\Phi\in C^2(\Bar \Omega)$, then one can pass to the general case
using the standard regularization procedure.
We have
\begin{align*}
\big|\nabla(e^\Phi u)\big|^2 & = \Big(\dfrac{\partial}{\partial x_1}(e^\Phi u)\Big)^2
+
\Big(\dfrac{\partial}{\partial x_2}(e^\Phi u)\Big)^2\\
&=
\Big(\dfrac{\partial\Phi}{\partial x_1} e^\Phi u + e^\Phi \dfrac{\partial u}{\partial x_1}\Big)^2
+
\Big(\dfrac{\partial\Phi}{\partial x_2} e^\Phi u + e^\Phi \dfrac{\partial u}{\partial x_2}\Big)^2\\
&= |\nabla\Phi|^2e^{2\Phi} u^2 + 2e^\Phi u \nabla \Phi\cdot\nabla u+e^{2\Phi} |\nabla u|^2\\
&=|\nabla \Phi|^2e^{2\Phi} u^2+\nabla(e^{2\Phi} u)\cdot \nabla u\,.
\end{align*}
Integrating this equality in  $\Omega$,  we arrive at
\begin{multline*}
\iint_{\Omega} \big|\nabla(e^\Phi u)\big |^2 dx
=
\iint_\Omega| \nabla \Phi|^2e^{2\Phi} u^2dx + \iint_\Omega \nabla(e^{2\Phi} u)\cdot \nabla u dx\\
=
\iint_\Omega |\nabla \Phi|^2e^{2\Phi} u^2dx +\int_{\partial\Omega} e^{2\Phi} u \dfrac{\partial u}{\partial n}ds
+\iint_\Omega e^{2\Phi} u(-\Delta u)dx
\\
=
\iint_\Omega |\nabla \Phi|^2e^{2\Phi} u^2dx +\beta\int_{\partial\Omega} e^{2\Phi} u^2ds +\iint_\Omega e^{2\Phi} u(-\Delta u)\,dx\,.
\end{multline*}
\end{proof}

Now, let us choose a constant $b>0$ such that all corners of $\Omega$ are contained
in the ball of radius $bL$ centered at the origin, and consider the function
$\Phi:\Omega\to \RR$ defined by 
\[
\Phi(x):=\beta \min\Big\{\min_{j\in\{1,\dots,N\}}\cot\alpha_j\cdot |x-A_j|, b L\Big\}.
\]
For a compact $\Omega$ we choose the constant $b$ sufficiently large, so that
the exterior minimum can be dropped.

\begin{prop}\label{lem-agmon}
Let $\lambda=\lambda(L)>0$ be such that 
\[
\lim_{L\to+\infty} \lambda(L)=0\,.
\]
Then, for any $\varepsilon\in(0,1)$ there exists $C_\varepsilon>0$  and $L_\varepsilon$ such that, 
if $E=E(L)$ is an eigenvalue of $H$ satisfying
\begin{equation}
        \label{eq-leq}
E\le -\,\dfrac{\beta^2}{\sin^2 \alpha}+\lambda\,,
\end{equation}
and $u$ is an associated normalized eigenfunction, then
\[
\big\|e^{(1-\varepsilon)\Phi} u\big\|_{H^1(\Omega)}\le C_\varepsilon e^{\varepsilon L} \quad \text{for} \quad L\geq L_\epsilon\,.
\]
\end{prop}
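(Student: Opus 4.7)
The plan is to run a weighted energy argument of Agmon--Helffer--Sj\"ostrand type, combining the Agmon-type identity of Lemma~\ref{lem8} with an IMS localization and the refined sector bound~\eqref{eq-hproj}. First I would apply Lemma~\ref{lem8} to $u$ with weight $(1-\varepsilon)\Phi$, using $-\Delta u = Eu$ to absorb the Laplacian. Setting $v:=e^{(1-\varepsilon)\Phi}u$, this produces the Agmon identity
\[
h_{\Omega,\beta}(v,v) = E\,\|v\|^2 + (1-\varepsilon)^2\iint_\Omega |\nabla\Phi|^2\, v^2\,dx,
\]
which translates the desired exponential decay of $u$ into a coercivity estimate on the Robin quadratic form evaluated at $v$.

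Next, I would introduce a smooth partition of unity $\{\chi_j\}_{j=0}^N$ on $\Omega$ with $\sum_j\chi_j^2\equiv 1$, where $\chi_j$ (for $j\ge 1$) is supported in a ball of fixed ($L$-independent) radius around the vertex $A_j$, small enough that $\Omega$ coincides locally with the sector $S_{\alpha_j}$, while $\chi_0$ is supported away from all vertices. The IMS identity reads
\[
h_{\Omega,\beta}(v,v) = \sum_j h_{\Omega,\beta}(\chi_j v,\chi_j v) - \sum_j \iint_\Omega|\nabla\chi_j|^2\, v^2\,dx.
\]
For $j\ge 1$, extending $\chi_j v$ by zero to $S_{\alpha_j}$ (the Robin condition is preserved) and using the spectral decomposition of $H_{\alpha_j}$ in the spirit of~\eqref{eq-hproj} yields
\[
h(\chi_j v,\chi_j v) \ge -\beta^2\|\chi_j v\|^2 - \beta^2\cot^2\alpha_j\,|\langle U_{\alpha_j},\chi_j v\rangle|^2.
\]
For $\chi_0 v$, whose support meets $\partial\Omega$ only along smooth straight segments, comparison with the half-plane Robin Laplacian (whose ground-state energy is $-\beta^2$) gives $h(\chi_0 v,\chi_0 v)\ge -\beta^2\|\chi_0 v\|^2$.

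Combining with the Agmon identity produces the master inequality
\[
\iint_\Omega\bigl[-\beta^2 - E - (1-\varepsilon)^2|\nabla\Phi|^2\bigr]\,v^2\,dx \;\le\; \sum_{j\ge1}\beta^2\cot^2\alpha_j\,|\langle U_{\alpha_j},\chi_j v\rangle|^2 + \sum_j\iint|\nabla\chi_j|^2\,v^2\,dx.
\]
The key algebraic fact is $\csc^2\alpha_j-\cot^2\alpha_j=1$, i.e.\ $-\beta^2-E_{\alpha_j}=\beta^2\cot^2\alpha_j$. Together with $E\le-\beta^2/\sin^2\alpha+\lambda$ and $\cot^2\alpha_j\le\cot^2\alpha$, a direct calculation shows that the integrand on the left is bounded below pointwise by $\beta^2\cot^2\alpha\,(2\varepsilon-\varepsilon^2)-\lambda\ge c_\varepsilon>0$ for $\lambda$ small (hence $L\ge L_\varepsilon$). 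The projection terms are estimated via Cauchy--Schwarz: using the explicit expression~\eqref{defualpha} of $U_{\alpha_j}$ and the pointwise inequality $(1-\varepsilon)\Phi(x)\le(1-\varepsilon)\beta|x-A_j|/\sin\alpha_j$ in sector coordinates, the integrand $U_{\alpha_j}^2 e^{2(1-\varepsilon)\Phi}$ still decays exponentially at rate $2\varepsilon\beta/\sin\alpha_j$, so $|\langle U_{\alpha_j},\chi_j v\rangle|^2\le C_\varepsilon$ uniformly in $L$.

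This yields the $L^2$-bound $\|v\|^2\le C'_\varepsilon$ uniformly in $L$. The $H^1$-bound then follows from the Agmon identity itself, which expresses $\|\nabla v\|^2$ as uniformly bounded volume terms plus $\beta\int_{\partial\Omega}v^2\,ds$; the latter is controlled via the multiplicative trace inequality on the polygonal domain $\Omega_L=L\Omega_1$, with the factor $e^{\varepsilon L}$ accommodating the growth of the perimeter and the non-smoothness of the boundary at the corners. The main obstacle is precisely the positivity of $-\beta^2-E-(1-\varepsilon)^2|\nabla\Phi|^2$ near the deepest vertices ($\alpha_j=\alpha$), where the identity $\beta^2\cot^2\alpha+E_\alpha=-\beta^2$ makes the naive bound (with effective floor $E_\alpha$) fail exactly; it is the refinement to the essential-spectrum floor $-\beta^2$, paid for by the ground-state projection correction that is then absorbed by the exponential decay of $U_{\alpha_j}$, which leaves the $O(\varepsilon)$ margin needed to close the estimate.
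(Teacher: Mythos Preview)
Your overall architecture---Agmon identity (Lemma~\ref{lem8}) plus IMS localization plus sector comparison---matches the paper's, but your implementation is genuinely different and, as written, does not close. The paper takes cut-offs of radius $rL$ with $r$ small (so the IMS error is $O(L^{-2})$), uses only the \emph{crude} sector bound $h(\chi_j v,\chi_j v)\ge E_{\alpha_j}\|\chi_j v\|^2$ at each vertex, keeps a fraction $\delta$ of the Dirichlet integral to recover the $H^1$ norm, and obtains positivity of the ``good'' coefficient only on $\supp\chi_0$; the corner terms stay on the right-hand side. The factor $e^{\varepsilon L}$ then arises simply because $\Phi=O(rL)$ on the corner balls, and one finishes by choosing $r\sim\varepsilon$. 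No information beyond $\Lambda_\alpha>E_\alpha$ is used.

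Your route instead fixes $L$-independent cut-off radii and upgrades the sector bound to the refined form with projection onto $U_{\alpha_j}$, aiming for the essential-spectrum floor $-\beta^2$ everywhere. The trouble is that this floor is \emph{not} supplied by the paper. First, \eqref{eq-hproj} only gives $h(\chi_j v,\chi_j v)\ge \Lambda_{\alpha_j}\|\chi_j v\|^2+(E_{\alpha_j}-\Lambda_{\alpha_j})|\langle U_{\alpha_j},\chi_j v\rangle|^2$, and the paper states only $\Lambda_{\alpha_j}>E_{\alpha_j}$, not $\Lambda_{\alpha_j}\ge-\beta^2$. If the sector had a second bound state below $-\beta^2$, your integrand $-\beta^2-E-(1-\varepsilon)^2|\nabla\Phi|^2$ would have to be replaced by $\Lambda_{\alpha}-E-(1-\varepsilon)^2\beta^2\cot^2\alpha$, which for small $\varepsilon$ is $\Lambda_\alpha+\beta^2-\lambda+O(\varepsilon)$ and need not be positive. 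Second, the bulk bound $h(\chi_0 v,\chi_0 v)\ge -\beta^2\|\chi_0 v\|^2$ does not follow from a direct half-plane comparison: $\chi_0 v$ generically meets several edges, and summing the single-edge half-plane inequalities over-counts the gradient. (The paper circumvents this by embedding $\supp\chi_0$ in a \emph{smooth} scaled domain $LD$ and invoking $\inf\spec H(LD,\beta)\to-\beta^2$; with $L$-scaled cut-offs the resulting $o(1)$ error is harmless.) Third, you never bound the IMS error $\sum_j\int|\nabla\chi_j|^2v^2$; with fixed radii this is $O(1)$, not $o(1)$, and must be handled separately (it is in fact bounded since $\Phi$ is bounded on $\supp\nabla\chi_j$, but you should say so).

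Your $H^1$ step is also shaky: the multiplicative trace inequality on $L\Omega_1$ has a scale-invariant constant, so ``perimeter growth'' is a red herring; if your $L^2$ bound were correct you would actually get a \emph{uniform} $H^1$ bound by absorbing half of $\|\nabla v\|^2$. The cleanest fix---and what the paper does---is to reserve a small multiple $\delta$ of the Dirichlet integral before applying any lower bound on $h$, so that the $H^1$ control is built in from the start rather than recovered afterwards through the boundary term.
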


\begin{proof}
Let $r>0$. Let us pick a $C^\infty$ function $\chi:[0,+\infty)\to[0,1]$ such that
$\chi(t)=1$ for $t\le r$ and $\chi(t)=0$ for $t>2r$, and
introduce 
\[
\Tilde \chi_{j}(x)=\chi\Big(\dfrac{|x-A_j|}{L}\Big), \quad j=1,\dots ,N\,.
\]
We assume that $r$ is sufficiently small, which ensures that the supports of $\Tilde\chi_j$
are disjoint and that $\Phi(x)=\beta\cot\alpha_j\,  |x-A_j|$ for $x\in\supp \Tilde\chi_j$.
An exact value of $r$ will be chosen later. We also complete by the function 
\[
\Tilde\chi_{0}:=1-\sum_{j=1}^N \Tilde \chi_j \,,
\]
and, finally, set
\[
\chi_j:=\Tilde \chi_j \Big/ \sqrt{\sum_{k=0}^N \Tilde\chi_k^2}, \quad j=0,\dots, N.
\]
We observe  that we have the equalities $\supp\chi_j=\supp\Tilde\chi_j$, that
each $\chi_j$ is $C^\infty$, and that
\[
\sum_{j=0}^N \chi_j^2=1\,.
\]

For any $v\in H^1(\Omega)$ we also have $\chi_j v\in H^1(\Omega)$, and
by a direct computation one obtains
\[
h_{\Omega,\beta}(v,v)=\sum_{j=0}^N h_{\Omega,\beta}(\chi_j v,\chi_j v)-
\sum_{j=0}^N\big\| v \nabla\chi_j\big\|^2\,.
\]
By construction of $\chi_{j}$, we one can find a constant $C>0$ independent of $v$ and $L$ with
\[
h_{\Omega,\beta}(v,v)\ge\sum_{j=0}^N h_{\Omega,\beta}(\chi_j v,\chi_j v)-\dfrac{C}{L^2}\|v\|^2
\text{ for large } L\,.
\]
Now let us denote $\Psi:=(1-\varepsilon)\Phi$.  By applying the preceding inequality we obtain
\begin{multline}
         \label{eq-i0}
I:=\iint_{\Omega}\big|\nabla(e^\Psi u)\big|^2dx - \beta\int_{\partial\Omega} |e^\Psi u|^2ds
\ge \delta \iint_{\Omega}\big|\nabla(e^\Psi u)\big|^2\,dx
\\+
(1-\delta)\bigg[
\sum_{j=0}^N \Big(\iint_{\Omega}\big|\nabla(\chi_j e^\Psi u)\big|^2\, dx
-\dfrac{\beta}{1-\delta}\int_{\partial\Omega }\big|\chi_j e^\Psi u\big|^2\,ds
\Big)
-\dfrac{C}{L^2}\iint_{\Omega} |e^\Psi u|^2\,dx\,,
\bigg],
\end{multline}
where $\delta\in(0,1)$ is a constant which will be chosen later.

Furthermore, considering $\chi_j e^\Psi u$ as a function from $H^1(S_j)$, where
$S_j$ is a suitably rotated copy of the sector $S_{\alpha_j}$ (see Subsection~\ref{ss-sa})
which coincides with $\Omega$ near $A_j$,  we have, for  $j=1,\dots,N$,
\[
\iint_{\Omega}\big|\nabla(\chi_j e^\Psi u)\big|^2dx
-\dfrac{\beta}{1-\delta}
\int_{\partial\Omega }\big|\chi_j e^\Psi u\big|^2ds
\ge 
-\dfrac{\beta^2}{(1-\delta)^2\sin^2\alpha_j}\iint_{\Omega} |\chi_j e^\Psi u|^2dx.
\]

By the preceding constructions, the support of $\chi_0$ is of the form
 $\supp \chi_0=L \Omega'$ with some $L$-independent $\Omega'$.
Furthermore, one can construct a smooth domain $D$ with  $L\Omega'\subset LD\subset \Omega$
and such that $\partial (L\Omega')\cap \partial\Omega=\partial (L D)\cap \partial\Omega$.
As mentioned in the introduction, the lowest eigenvalue of
$H\big(LD,\beta/(1-\delta)\big)$ for large $L$ converges to $-\beta^2/(1-\delta)^2$,
i.e. for any $v\in H^1(LD)$ we have
\[
\iint_{LD}\big|\nabla v\big|^2dx
-\dfrac{\beta}{1-\delta}
\int_{\partial(LD) }| v|^2ds
\ge -\Big(\dfrac{\beta^2}{(1-\delta)^2}+\varepsilon_0\Big)
\iint_{LD} |v|^2dx, 
\]
where $\varepsilon_0:=\varepsilon_0(L,\delta)>0$ is such that
$\lim_{L\to+\infty}\varepsilon_0=0$ for any fixed $\delta\in(0,1)$.
By taking $v=\chi_0 e^\Psi u$ 
we obtain
\[
\iint_{\Omega}\big|\nabla(\chi_0 e^\Psi u)\big|^2dx
-\dfrac{\beta}{1-\delta}
\int_{\partial\Omega }\big|\chi_0 e^\Psi u\big|^2ds
\ge -\Big(\dfrac{\beta^2}{(1-\delta)^2}+\varepsilon_0\Big)
\iint_{\Omega} |\chi_0 e^\Psi u|^2dx.
\]

Putting the preceding estimates together we arrive at
\begin{multline}
       \label{eq-i1}
I\ge \delta \iint_{\Omega}\big|\nabla(e^\Psi u)\big|^2dx
-\Big( \dfrac{\beta^2}{1-\delta} + \dfrac{(1-\delta)C}{L^2}+\varepsilon_1\Big)
\iint_{\Omega}|\chi_0e^\Psi u|^2dx\\
-
\sum_{j=1}^N
\Big(
\dfrac{\beta^2}{(1-\delta)\sin^2\alpha_j}
+\dfrac{(1-\delta)C}{L^2}
\Big)
\iint_{\Omega}|\chi_je^\Psi u|^2dx
\end{multline}
with $\varepsilon_1:=(1-\delta)\varepsilon_0$. 
On the other hand, due to Lemma~\ref{lem8} we have
\begin{multline}
   \label{eq-i2}
I=\iint_{\Omega} e^{2\Psi} u(-\Delta u)dx+\iint_{\Omega} |\nabla \Psi|^2 e^{2\Psi}u^2\,dx\\
=E \iint_{\Omega} e^{2\Psi} u^2 dx+\iint_{\Omega} |\nabla \Psi|^2 e^{2\Psi}u^2\,dx
=\sum_{j=0}^N \iint_{\Omega}\big(E+ |\nabla \Psi|^2 \big)|\chi_j e^\Psi u|^2\,dx\,.
\end{multline}
We estimate as follows:
\begin{gather*}
\big|\nabla \Psi(x)\big|\le (1-\varepsilon)^2\beta^2 \cot\alpha\equiv
(1-\varepsilon)^2\beta^2\Big(\dfrac{1}{\sin^2\alpha}-1\Big), \quad x\in\supp\chi_0\,,\\
\big|\nabla \Psi(x)\big|\le (1-\varepsilon)^2\beta^2 \cot\alpha_j\equiv
(1-\varepsilon)^2\beta^2\Big(\dfrac{1}{\sin^2\alpha_j}-1\Big), \quad x\in\supp\chi_j\,,
\quad j=1,\dots,N\,.
\end{gather*}
Substituting these two inequalities into \eqref{eq-i2} and using \eqref{eq-leq} we arrive at
\begin{align*}
I&\le \bigg( -\dfrac{\beta^2}{\sin^2\alpha}+\lambda +(1-\varepsilon)^2 \beta^2
\Big(\dfrac{1}{\sin^2\alpha}-1\Big)
\bigg)\iint_{\Omega} |\chi_0 e^\Psi u|^2 dx\\
&\quad +\sum_{j=1}^N
\bigg( -\dfrac{\beta^2}{\sin^2\alpha}+\lambda +(1-\varepsilon)^2 \beta^2
\Big(\dfrac{1}{\sin^2\alpha_j}-1\Big)
\bigg)
\iint_{\Omega} |\chi_j e^\Psi u|^2\, dx\,.
\end{align*}
Combining with \eqref{eq-i1} we have:
\begin{gather*}
\delta\iint_{\Omega}\big|\nabla(e^\Psi u)\big|^2\,dx
+C_0 \iint_{\Omega} |\chi_0e^\Psi u|^2\,dx \le 
\sum_{j=1}^N C_j |\chi_je^\Psi u|^2\,dx\,,\\
\intertext{where}
\begin{aligned}
C_0&:= (2\varepsilon-\varepsilon^2)\Big(\dfrac{1}{\sin^2\alpha}-1\Big)\,\beta^2 -\dfrac{\delta}{1-\delta}\,\beta^2
-\dfrac{(1-\delta)C}{L^2}-\varepsilon_1-\lambda\,,\\
C_j&:=-\dfrac{\beta^2}{\sin^2\alpha}+(1-\varepsilon)^2\Big(\dfrac{1}{\sin^2 \alpha_j}-1\Big)\beta^2
+\dfrac{\beta^2}{(1-\delta)\sin^2\alpha_j}+\dfrac{(1-\delta)C}{L^2}+\lambda\,,\, j=1,\dots,N\,.
\end{aligned}
\end{gather*}
As $\varepsilon>0$ is a fixed positive number and both $\varepsilon_1$ and $\lambda$ tend to $0$ as $L\to +\infty\,$, we can find
$m_\varepsilon>0$, $\delta>0$ and $L_0>0$ such that $C_0\ge m_\varepsilon$
for all  $L>L_0\,$. At the same time, for the same $\delta$ and $L$
we may estimate $C_j\le M_{\varepsilon}\,$, $j=1,\dots,N\,$, which gives
\[
\iint_{\Omega}\big|\nabla(e^\Psi u)\big|^2\,dx
+\iint_{\Omega} |\chi_0e^\Psi u|^2\,dx \le C_{\varepsilon}
\sum_{j=1}^N |\chi_je^\Psi u|^2\,dx\,, \quad
C_{\varepsilon}:=\dfrac{M_{\varepsilon}}{\delta}+\dfrac{M_{\varepsilon}}{m_\varepsilon}\,.
\]
Now we get the estimate
\begin{multline*}
\|e^{(1-\varepsilon)\Phi}u\|^2_{H^1(\Omega)}
=\|e^\Psi u\|^2_{H^1(\Omega)}=
\iint_\Omega \big|\nabla(e^\Psi u)\big|^2\,dx
+\iint_\Omega |e^\Psi u|^2\,dx\\
=
\iint_\Omega \big|\nabla(e^\Psi u)\big|^2\,dx
+\iint_\Omega |\chi_0 e^\Psi u|^2\,dx
+\sum_{j=1}^N |\chi_je^\Psi u|^2\,dx
\le (1+C_{\varepsilon})\sum_{j=1}^N |\chi_je^\Psi u|^2\,dx\\
\le (1+C_{\varepsilon}) \exp\Big[(1-\varepsilon)\max_{j\in\{1,\dots,N\}}\sup_{x\in\supp\chi_j} \Phi(x)\Big]
\sum_{j=1}^N \iint_{\Omega}|\chi_j u|^2\,dx\,.
\end{multline*}
We have
\[
\sum_{j=1}^N \iint_{\Omega}|\chi_j u|^2\,dx
\le \sum_{j=0}^N \iint_{\Omega}|\chi_j u|^2\,dx= \iint_{\Omega}|u|^2\,dx=1\,,
\]
and 
\[
\max_{j\in\{1,\dots,N\}}\sup_{x\in\supp\chi_j} \Phi(x)\le 2r\beta(\cot\alpha) L\,.
\]
Therefore, by taking $r< \varepsilon/(2t\beta\cot\alpha)\,$,  we get the conclusion.
\end{proof}

\section{The lowest eigenvalues of $H_L$}\label{spl}

\subsection{Notation}

In this section we study in greater detail the lowest
eigenvalues of the operator $H_L$. We collect first some notation and conventions
used below. Note that all the assertions of Section~\ref{prel} are applicable to $H_L$ as well.
Throughout the section we will write 
\[
\alpha:=\dfrac \omega 2 \quad \mbox{ and } \quad \Omega:=\Omega_L\,.
\]
Furthermore, we introduce the following transformations of $\RR^2\,$:
\[
R_1 (x_1,x_2)=\begin{pmatrix}
\cos \alpha & \sin\alpha\\
-\sin\alpha & \cos\alpha
\end{pmatrix}
\begin{pmatrix}
x_1+L\\ x_2
\end{pmatrix},
\quad
R_2 (x_1,x_2)=\begin{pmatrix}
\cos \alpha & \sin\alpha\\
\sin\alpha & -\cos\alpha
\end{pmatrix}
\begin{pmatrix}
L-x_1\\ x_2
\end{pmatrix}.
\]
The geometric meaning of $R_j$ is clear from the equalities
$R_j (\Sigma_j)=S_\alpha$, $j=1,2$,
and we consider the associated rotated eigenfunctions
\[
U_j(x):=U_{\alpha}(R_j x)\,,  \, j=1,2\,.
\]
Recall that $S_\alpha$ and $U_\alpha$ are defined in Subsection~\ref{ss-sa}, so we have
\begin{equation}
   \label{eq-UUU}
\begin{aligned}
U_1(x_1,x_2)&=\beta \sqrt{\dfrac{2\cos\alpha}{\sin^3\alpha}} \,e^{-\beta (x_1+L)\cot\alpha -\beta x_2},\\
U_2(x_1,x_2)&=\beta \sqrt{\dfrac{2\cos\alpha}{\sin^3\alpha}} \, e^{-\beta (L-x_1)\cot\alpha -\beta x_2}.
\end{aligned}
\end{equation}
We also recall the notation 
\[
E_\alpha:=-\beta^2/ \sin^2\alpha\,.
\]
Furthermore, for $j=1,2$ we denote by $M_j$ the Robin Laplacian in $\Sigma_j$,
\[
M_j:=H(\Sigma_j,\beta)\,.
\]

\subsection{A rough eigenvalue estimate}

Let us obtain some rough information on the behavior of the eigenvalues  of
$H_L$ as $L$ tends to $+\infty$. Assuming that $H_L$ has at least $n-1$ eigenvalues below the essential spectrum, we
denote
\[
\widetilde E_n (L):=\inf (\spec H_L)\setminus\big\{E_1(L),\dots,E_{n-1}(L)\big\}\,,
\]

\begin{lemma}\label{lem10}
Let $\omega\in \big(0,\frac \pi 3\big) \mathbin{\cup} \big[\frac\pi 2,\pi\big)$, then
for sufficiently large $L$ the operator $H_L$ has at least two eigenvalues below the essential spectrum,
and one has
\begin{gather}
     \label{eq-e12}
\lim_{L\to+\infty} E_j(L)=E_{\alpha}, \quad j=1,2\,,\\
       \label{eq-e31}
\liminf_{L\to+\infty} \Tilde E_3(L) > E_{\alpha}\,.
\end{gather}
\end{lemma}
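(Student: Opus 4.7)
The overall strategy combines the max-min principle (Proposition~\ref{maxmin}) with trial functions built from the sector ground states $U_1, U_2$ (for the upper bound (\ref{eq-e12})) and a partition-of-unity/IMS argument together with the spectral inequality (\ref{eq-hproj}) (for the lower bound (\ref{eq-e31})).

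For the upper bound, I would use $U_1, U_2$, restricted to $\Omega$, as trial functions. Since $U_j$ solves $-\Delta U_j = E_\alpha U_j$ on $\Sigma_j \supset \Omega$ and satisfies the Robin condition on $\partial\Sigma_j \cap \partial\Omega$, the Green--Riemann identity (\ref{eq-gr}) yields
\[
h_{\Omega,\beta}(U_j, U_j) = E_\alpha \|U_j\|_{L^2(\Omega)}^2 + \int_{\partial\Omega\setminus\partial\Sigma_j} U_j\Bigl(\frac{\partial U_j}{\partial n} - \beta U_j\Bigr)\,ds.
\]
The extra boundary lies at distance of order $L$ from $A_j$, so using the explicit formula (\ref{eq-UUU}) the boundary term, the defect $1 - \|U_j\|_{L^2(\Omega)}^2$, and the overlap $\langle U_1, U_2\rangle_{L^2(\Omega)}$ are all $O(e^{-cL})$ for some $c > 0$. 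Diagonalising the restriction of $H_L$ to $\linspan(U_1, U_2)$ and applying the min-max gives $E_j(L) \le E_\alpha + O(e^{-cL})$ for $j = 1,2$. The matching lower bound comes from Proposition~\ref{prop-lp}; since $\alpha < \pi/2$ in both regimes we have $E_\alpha < -\beta^2$, so both values lie below the essential spectrum for large $L$, yielding (\ref{eq-e12}) and ensuring the existence of at least two discrete eigenvalues.

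For the lower bound on $\tilde E_3(L)$, I would introduce a smooth partition $\sum_j \chi_j^2 = 1$ on $\Omega$ with $\chi_1, \chi_2$ supported in balls of radius $\rho L$ about $A_1, A_2$ (with $\rho$ fixed and small enough that $\Omega\cap\supp\chi_j = \Sigma_j\cap\supp\chi_j$); $\chi_0$ supported in the bulk, at distance of order $L$ from every vertex; and, in the triangle case $\omega < \pi/3$, an additional cutoff $\chi_3$ sector-matching $A_3$. The IMS localization (as used in the proof of Proposition~\ref{lem-agmon}) reads
\[
h_{\Omega,\beta}(u,u) = \sum_j h_{\Omega,\beta}(\chi_j u, \chi_j u) - \sum_j \||\nabla \chi_j|\,u\|^2,
\]
with $\sum_j|\nabla\chi_j|^2 = O(L^{-2})$. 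For $j = 1,2$, zero extension identifies $\chi_j u$ with an element of the form domain of $M_j$, and (\ref{eq-hproj}) gives
\[
h_{\Omega,\beta}(\chi_j u, \chi_j u) \ge \Lambda_\alpha \|\chi_j u\|^2 - (\Lambda_\alpha - E_\alpha)\bigl|\langle \chi_j u, U_j\rangle_{L^2(\Sigma_j)}\bigr|^2.
\]
For the bulk piece, $\chi_0 u$ vanishes near every vertex, so zero extension places it in the form domain of a Robin Laplacian on a half-plane or smooth wedge of opening $\ge \pi/2$, whose spectral infimum is $-\beta^2$; hence $h_{\Omega,\beta}(\chi_0 u, \chi_0 u) \ge -\beta^2 \|\chi_0 u\|^2 > E_\alpha \|\chi_0 u\|^2$. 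In the triangle case, the hypothesis $\omega < \pi/3$ gives $\alpha_3 = \pi/2 - \omega > \omega/2 = \alpha$ and hence $E_{\alpha_3} > E_\alpha$; a similar sector comparison produces $h_{\Omega,\beta}(\chi_3 u, \chi_3 u) \ge (E_{\alpha_3} - o(1))\|\chi_3 u\|^2$.

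To finish, I would apply the max-min characterization of $\tilde E_3(L)$ with $\chi_1 U_1$ and $\chi_2 U_2$ as the two distinguished vectors. For $u$ orthogonal to both in $L^2(\Omega)$, one has $\langle \chi_j u, U_j \rangle_{L^2(\Sigma_j)} = \langle u, \chi_j U_j\rangle = 0$, so the negative projection terms vanish, and summing the pieces yields
\[
h_{\Omega,\beta}(u,u) \ge \bigl(\min\{\Lambda_\alpha,\; -\beta^2,\; E_{\alpha_3}\} - o(1)\bigr)\|u\|^2,
\]
whose right-hand side exceeds $E_\alpha$ by a fixed positive amount for large $L$, proving (\ref{eq-e31}). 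The main technical obstacle I foresee is the clean handling of the bulk: one must arrange $\supp\chi_0$ so that it meets only the smooth (straight) portions of $\partial\Omega$, and then carry out the half-plane (or wedge) comparison without spoiling the Robin condition. A secondary subtlety is that in the triangle case the partition must simultaneously sector-match three different corners while keeping the localization error $O(L^{-2})$.
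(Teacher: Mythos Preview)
Your proposal is correct and follows essentially the same scheme as the paper: cutoff trial functions near $A_1,A_2$ for the upper bound, and an IMS localization combined with the spectral inequality \eqref{eq-hproj} at the two corners for the lower bound, with the max-min applied after projecting out the two-dimensional span of $\chi_1 U_1,\chi_2 U_2$.

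The one place where the paper's argument differs from yours is precisely the bulk obstacle you flag. Your half-plane/wedge extension for $\chi_0 u$ is not quite workable as stated, because $\supp\chi_0$ generally meets several non-collinear edges of $\partial\Omega$ simultaneously, so a single zero-extension to a half-plane does not exist. The paper sidesteps this by enclosing $\supp\chi_0$ in a scaled convex polygonal domain $LD\subset\Omega$ whose boundary agrees with $\partial\Omega$ where $\supp\chi_0$ touches it and whose minimal corner $\theta$ is strictly larger than $\omega$; then Proposition~\ref{prop-lp} applied to $H(LD,\beta)$ gives $h_{\Omega,\beta}(\chi_0 u,\chi_0 u)\ge A\|\chi_0 u\|^2$ for any fixed $A<E_{\theta/2}$, and one chooses $A>E_\alpha$. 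This also makes a separate cutoff $\chi_3$ at the third vertex unnecessary in the triangle case, since $A_3$ is absorbed into $D$.
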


\begin{proof}
For $\delta>0$, let us pick a $C^\infty$ function $\chi:\RR_+\to[0,1]$ such that
$\chi(t)=1$ for $t \le \delta$ and $\chi(t)=0$ for $t>2\delta\,$.
Introduce the functions
\[
\Tilde\chi_{j}(x)=\chi\Big(\dfrac{|x-A_j|}{L}\Big), \quad j=1,2\,.
\]
We assume that $\delta$ is sufficiently small, which ensures that the supports of $\Tilde\chi_1$
and $\Tilde\chi_2$ do not intersect, and consider the functions
\[
v_j:=\Tilde\chi_{j} U_j\,,\quad j=1,2.
\]
By a simple computation,
as $L\to +\infty$ we have
\[
\iint_{\Omega} v_j v_k \,dx=\delta_{jk}+o(1), \quad
\iint_{\Omega} \nabla v_j\cdot \nabla v_k \,dx-\beta\int_{\partial\Omega}  v_j v_k\, ds=
E_{\alpha}\delta_{jk}+o(1)\,, \quad j,k=1,2.
\]
It follows that
\[
\sup_{0\not\equiv v\in\linspan(v_1,v_2)}
\dfrac{h_{\Omega,\beta}(v,v)}{\langle v,v\rangle}
\le  E_\alpha+o(1)< -\beta^2\equiv\inf\spec_\text{ess} H_L\,,
\]
the last inequality being true for $L$ large enough.

On the other hand, the functions $v_1$ and $v_2$ are linearly independent. It follows
that for any $\psi\in L^2(\Omega)$ one can find
a non-trivial linear combination $v\in\linspan(v_1,v_2)$ which is orthogonal to $\psi$.
Due to the previous estimate and Proposition~\ref{maxmin}
we obtain then
\[
E_2(L)\le E_\alpha+o(1)\,.
\]
Combining with $E_2(L)\ge E_1(L)\,$,
and with the result of Proposition~\ref{prop-lp}, this gives \eqref{eq-e12}.

Let us now prove \eqref{eq-e31}.  Let us introduce
\[
\Tilde \chi_{0}:=1-\Tilde\chi_1-\Tilde\chi_2
\]
and set
\[
\chi_j:=\Tilde \chi_j \Big/ \sqrt{\sum_{k=0}^2 \Tilde\chi_k^2}, \quad j=0,1, 2.
\]
By a direct computation, for any $u\in H^1(\Omega)$ we have
\[
h_{\Omega,\beta}(u,u)=\sum_{j=0}^2 h_{\Omega,\beta}(\chi_j u,\chi_j u)-
\sum_{j=0}^2\big\| u \nabla\chi_j\big\|^2,
\]
and by the construction of $\chi_{j}$, we can find $L_0>0$ and  $C>0$ such that for all  $u$ and $L\geq L_0$
\[
h_{\Omega,\beta}(u,u)\ge\sum_{j=0}^2h_{\Omega,\beta}(\chi_j u)-\dfrac{C}{L^2}\|u\|^2\,.
\]
Furthermore, we have $\chi_j u\in H^1(\Sigma_j)\,$, $j=1,2\,$.
Consider the orthogonal projections $\Pi_j:=\langle U_j,\cdot\rangle U_j$ in $L^2(\Sigma_j)\,$.
By applying the inequality \eqref{eq-hproj} we obtain
\[
h_{\Omega,\beta}(\chi_j u,\chi_j u)\ge (E_\alpha-\Lambda_\alpha) \|\Pi_j \chi_j u \|^2_{L^2(\Sigma_j)}
+\Lambda_{\alpha}\|\chi_j u\|^2_{L^2(\Sigma_j)}, \quad j=1,2\,.
\]
The norms in $L^2(\Sigma_j)$ can be replaced back by the norms in $L^2(\Omega)$, and we infer
\[
h_{\Omega,\beta}(u,u)\ge \langle u,\Pi u\rangle +
\Lambda_\alpha\big(\|\chi_1 u\|^2+\|\chi_2 u\|^2\big)
+ h_{\Omega,\beta}(\chi_0 u,\chi_0 u) -\dfrac{C}{L^2}\|u\big\|^2\,,
\]
where $\Pi:=(E_\alpha-\Lambda_\alpha)\big(\chi_1\Pi_1 \chi_1 +\chi_2\Pi_2 \chi_2\big)$
is an operator whose range is at most two-dimensional.

To estimate the term with $\chi_0$, we proceed as in the proof of Proposition~\ref{lem-agmon}.
By the preceding constructions, the support of $\chi_0$
has the form $\supp \chi_0=L \Omega'$ with some $L$-independent $\Omega'$.
Furthermore, one can construct a convex polygonal domain $D$ with  $L\Omega'\subset LD\subset \Omega$
such that $\partial (L\Omega')\cap \partial\Omega=\partial (L D)\cap \partial\Omega$
and that the minimal corner $\theta$ at the boundary of $D$ is strictly larger than $\omega$.
By Proposition~\ref{prop-lp} for any $A<E_{\theta/2}$ and any 
$v\in H^1(LD)$ we have, as $L$ is sufficiently large,
\[
h_{LD,\beta}(v,v)\ge  A \|v\|^2_{L^2(LD)}.
\]
As $E_{\theta/2}>E_{\omega/2}\equiv E_\alpha$, we may assume that $A>E_\alpha$.
Using the last equality with $v=\chi_0 u$ we obtain, for large $L$,
\[
h_{\Omega,\beta}(\chi_0 u,\chi_0 u)\ge A \|\chi_0 u\|^2\,.
\]

Putting all together and noting that $\|\chi_0 u\|^2+\|\chi_1 u\|^2+\|\chi_2 u\|^2=\|u\|^2$
we obtain, for sufficiently large $L$,
\[
h_{\Omega,\beta}(u,u)\ge \langle u,\Pi u\rangle + \Big(E-\dfrac{C}{L^2}\Big)\|u\big\|^2, \quad
E=\min(A,\Lambda_\alpha)>E_\alpha\,.
\]
Now take two vectors $\psi_1$ and $\psi_2$ spanning the range of $\Pi\,$.
For any non-zero  $u\in H^1(\Omega)$ which is orthogonal to $\psi_1$ and $\psi_2$
we have
\[
\dfrac{h_{\Omega,\beta}(u,u)}{\langle u,u\rangle}\ge E-\dfrac{C}{L^2}\,,
\]
which gives the announced inequality \eqref{eq-e31} by the max-min principle.
\end{proof}

The following assertion summarizes the preceding considerations:
\begin{prop}\label{corol13}
Let $\omega\in \big(0,\frac \pi 3\big) \mathbin{\cup} \big[\frac\pi 2,\pi\big)$, then
there exists $\delta>0$ and $L_0$ such that for $L\geq L_0$ the spectrum of $H_L$
in $(E_\alpha-\delta,E_\alpha+\delta)$ consists of exactly two eigenvalues $E_1(L)$ and $E_2(L)$, both converging to $E_\alpha$ as $L\to+\infty$.
\end{prop}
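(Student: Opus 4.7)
The plan is to deduce the proposition almost immediately from Lemma~\ref{lem10}. That lemma supplies three ingredients: (i) for large $L$, $H_L$ has at least two eigenvalues below $\inf\spec_{\text{ess}} H_L$ (the essential spectrum being empty for $\omega<\pi/2$ and equal to $[-\beta^2,+\infty)$ for $\omega\ge\pi/2$); (ii) both $E_1(L)$ and $E_2(L)$ converge to $E_\alpha$ by \eqref{eq-e12}; and (iii) $\liminf_{L\to+\infty}\Tilde E_3(L)>E_\alpha$ by \eqref{eq-e31}. The proposition amounts to packaging these three facts as a statement about the spectrum localised in a fixed window around $E_\alpha$.

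First I would pick $\delta>0$ satisfying
\[
2\delta < \liminf_{L\to+\infty}\Tilde E_3(L)-E_\alpha,
\]
which is possible by (iii). In the case $\omega\in[\frac\pi2,\pi)$ we have $\alpha=\omega/2\in[\frac\pi4,\frac\pi2)$, so $\sin\alpha<1$ and $E_\alpha=-\beta^2/\sin^2\alpha<-\beta^2$; I would then shrink $\delta$ further, if needed, so that also $E_\alpha+\delta<-\beta^2$. With this choice the window $(E_\alpha-\delta,E_\alpha+\delta)$ is strictly below the essential spectrum in both geometric regimes (automatically so for $\omega<\pi/2$, since there the operator has compact resolvent).

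Next, by (ii) there exists $L_0$ such that $|E_j(L)-E_\alpha|<\delta$ for $j=1,2$ and all $L\ge L_0$, so both eigenvalues lie in the window. After enlarging $L_0$ if necessary, (iii) together with the choice of $\delta$ yields $\Tilde E_3(L)>E_\alpha+\delta$ for $L\ge L_0$, so no further discrete eigenvalue of $H_L$ falls in the window. Combining this with the separation from the essential spectrum, the spectrum of $H_L$ in $(E_\alpha-\delta,E_\alpha+\delta)$ consists of exactly the two eigenvalues $E_1(L)$ and $E_2(L)$, and both converge to $E_\alpha$.

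There is no substantive obstacle here: the nontrivial analysis has all been carried out in Lemma~\ref{lem10}. The only point requiring mild bookkeeping is treating the two geometric regimes uniformly, and this is handled by the single extra constraint $E_\alpha+\delta<-\beta^2$ that is imposed only when $\omega\ge\pi/2$.
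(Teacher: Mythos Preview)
Your proof is correct and matches the paper's approach exactly: the paper does not give a separate proof of this proposition, merely stating that it ``summarizes the preceding considerations,'' i.e.\ Lemma~\ref{lem10}. Your explicit choice of $\delta$ (including the extra constraint $E_\alpha+\delta<-\beta^2$ when $\omega\ge\pi/2$) is precisely the bookkeeping the paper leaves implicit.
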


\begin{remark}\label{rem-tri}
Indeed, one can prove an analog of Lemma~\ref{lem10} for the remaining ranges of $\omega$
in a similar way, and one has:
\begin{gather}
\lim_{L\to+\infty} E_1(L)=E_{\alpha} \quad \text{and} \quad \liminf_{L\to+\infty} \Tilde E_2(L) > E_{\alpha} \quad
\text{for } \omega\in\Big(\dfrac\pi 3,\dfrac\pi 2\Big),\nonumber\\
\lim_{L\to+\infty} E_j(L)=E_{\alpha},\quad j=1,2,3, \quad \text{and}\quad \liminf_{L\to+\infty} \Tilde E_4(L) > E_{\alpha} 
\text{ for } \omega=\dfrac{\pi}{3},
\label{eq-equi}
\end{gather}
and Proposition~\ref{corol13} should be suitably reformulated.
We remark that the case $\omega=\pi/3$, i.e. the equilateral triangle, was already studied
in \cite[Section~7]{mc}, where it was found that after a suitable transformation
one may separate the variables, and the calculation of the eigenvalues reduces to solving
a certain non-linear system, which admits a rather direct analysis.
In particular,  the second inequality in~\eqref{eq-equi}
holds in the stronger form $\lim_{L\to+\infty} \Tilde E_4(L) = -\beta^2$.
\end{remark}


For the rest of the section,
we assume that 
\[
\omega\in \Big(0,\dfrac \pi 3\Big) \mathbin{\cup} \Big[\dfrac\pi 2,\pi\Big).
\]

\subsection{Cut-off functions}\label{ss-cutoff}

We are going to introduce a family of cut-off functions adapted to the geometry of
the sector $S_\alpha$ (see Subsection~\ref{ss-sa}). Note that our assumptions imply $\alpha <\frac \pi 2$.
Pick a function $\chi:\RR\to [0,1]$ such that
\begin{equation}
     \label{eq-chi}
\chi\in C^\infty(\RR), \quad \chi(t)=1 \text{ for } t \le-1, \quad \chi(t)=0 \text{ for } t \ge 0\,,
\end{equation}
and for $\ell>0$ we set
\begin{equation}
    \label{eq-pl}
\varphi_{\alpha,\ell}(x_1,x_2)=\chi(x_1-\ell\cos\alpha) \chi\big(|x|-(\ell-1)\big)\,.
\end{equation}

This function has the following properties for large $\ell$, see~Figure~\ref{fig3}:
\begin{equation}\label{eq-prophi}
\begin{gathered}
\varphi_{\alpha,\ell}\in C^\infty(\Bar S_\alpha)\,,\\
\varphi_{\alpha,\ell}(x)\in[0,1] \text{ for all } x\in S_\alpha\,,\\
\varphi_{\alpha,\ell}(x)=1 \text{ for } x=(x_1,x_2)\in\{x_1\le \ell\cos\alpha-2\}\cap S_\alpha\,,\\[\smallskipamount]
\varphi_{\alpha,\ell}(x)=0 \text{ for } x=(x_1,x_2)\notin \{x_1\le \ell\}\cap S_\alpha\,,\\
\dfrac{\partial\varphi_{\alpha,\ell}}{\partial n}=0 \text{ at } \partial S_\alpha\,, \\
\text{$\sum_{|\nu|\le2}\|D^\nu \varphi_{\alpha,\ell}\|_\infty\le c$
for some $c >0$ independent of $\ell\,$.}
\end{gathered}
\end{equation}
The slightly involved construction of $\varphi_{\alpha,\ell}$ guarantees
that for any function $f\in H^2(S_\alpha)$ with $\partial f/\partial n=\beta f$
at the boundary the product $\varphi_{\alpha,\ell} f$
still satisfies the same boundary condition.

\begin{figure}[t]
\centering
\myimage{70mm}{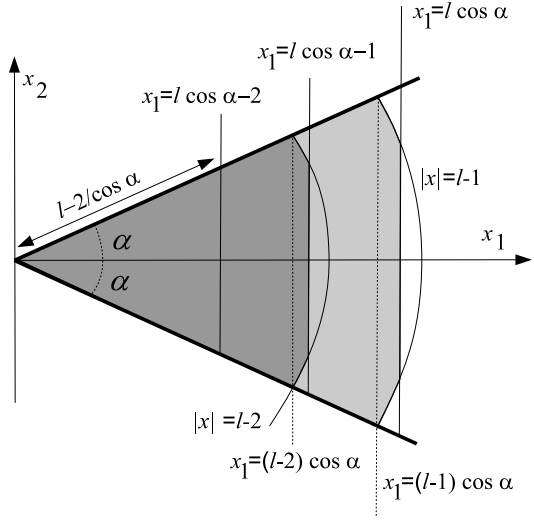}
\caption{The function $\varphi_{\alpha,\ell}$ vanishes outside the shaded domains,
and equals $1$ in the dark shaded domain.
\label{fig3}}
\end{figure}
Finally, we set
\[
\psi_{\alpha,\ell}(x):=\varphi_{\alpha,\ell}(x) U_{\alpha}(x)\,,
\]
where $U_\alpha$ is defined in \eqref{defualpha}.
Using the properties~\eqref{eq-prophi} and a simple direct computation one obtains:

\begin{lemma}\label{lem-uu}
The function $\psi_{\alpha,\ell}$ belongs to the domain of $H_\alpha\,$, and 
the following estimates are valid as $\ell\to+\infty\,$:
\begin{align}
        \label{eq-unorm}
\|\psi_{\alpha,\ell}\|^2_{L^2(S_\alpha)}&=1+O(\ell e^{-2\beta\ell\cot\alpha})\,,\\ 
        \label{eq-ulapl}
\big\|(-\Delta-E_\alpha)\psi_{\alpha,\ell}\big\|^2_{L^2(S_\alpha)}&=O(\ell e^{-2\beta\ell\cot\alpha})\,.
\end{align}
\end{lemma}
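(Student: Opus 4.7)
The guiding observation is that the cutoff $\varphi_{\alpha,\ell}$ begins to drop precisely at the level $x_1=\ell\cos\alpha$, where $U_\alpha^2$ is already of size $e^{-2\beta\ell\cot\alpha}$; the polynomial prefactor $\ell$ will arise from the two-dimensional area of the transition bands.

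First I would verify $\psi_{\alpha,\ell}\in D(H_\alpha)$. The outer factor $\chi\big(|x|-(\ell-1)\big)$ vanishes for $|x|\ge\ell-1$, so $\psi_{\alpha,\ell}$ has compact support in $\Bar S_\alpha$, and by the smoothness part of~\eqref{eq-prophi} it belongs to $H^2(S_\alpha)$. The Robin boundary condition follows from the Leibniz rule together with $\partial\varphi_{\alpha,\ell}/\partial n=0$ on $\partial S_\alpha$ (from~\eqref{eq-prophi}) and $\partial U_\alpha/\partial n=\beta U_\alpha$ on $\partial S_\alpha$ (immediate from~\eqref{defualpha}); the characterization of $D(H_\alpha)$ analogous to Proposition~\ref{prop-dom} then gives the inclusion.

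For~\eqref{eq-unorm} I would write
\[
\|\psi_{\alpha,\ell}\|^2_{L^2(S_\alpha)}=\|U_\alpha\|^2_{L^2(S_\alpha)}-\iint_{S_\alpha}(1-\varphi_{\alpha,\ell}^2)U_\alpha^2\,dx =1-\iint_{S_\alpha}(1-\varphi_{\alpha,\ell}^2)U_\alpha^2\,dx,
\]
using the normalization of $U_\alpha$ from Proposition~\ref{prop-seceig}. The integrand vanishes on $\{x_1\le\ell\cos\alpha-2\}\cap\{|x|\le\ell-1\}$, so the remainder is bounded by $\iint_{S_\alpha\cap\{x_1\ge\ell\cos\alpha-2\}}U_\alpha^2\,dx$. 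Inside $S_\alpha$ one has $|x_2|\le x_1\tan\alpha$, and $U_\alpha^2$ is a constant multiple of $e^{-2\beta x_1/\sin\alpha}$, so the remainder reduces to $C\int_{\ell\cos\alpha-2}^{\infty}x_1 e^{-2\beta x_1/\sin\alpha}\,dx_1 = O(\ell e^{-2\beta\ell\cot\alpha})$.

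For~\eqref{eq-ulapl} the central observation is $-\Delta U_\alpha=E_\alpha U_\alpha$, which is immediate from~\eqref{defualpha} since $U_\alpha$ depends only on $x_1$. Consequently,
\[
(-\Delta-E_\alpha)\psi_{\alpha,\ell}=-(\Delta\varphi_{\alpha,\ell})U_\alpha-2\nabla\varphi_{\alpha,\ell}\cdot\nabla U_\alpha,
\]
and this is supported on the transition set $\{\nabla\varphi_{\alpha,\ell}\ne 0\}\cup\{\Delta\varphi_{\alpha,\ell}\ne 0\}$, contained in the union of the two bands $B_1:=S_\alpha\cap\{\ell\cos\alpha-1\le x_1\le\ell\cos\alpha\}$ and $B_2:=S_\alpha\cap\{\ell-2\le|x|\le\ell-1\}$. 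Combining the uniform bound on $D^\nu\varphi_{\alpha,\ell}$ from~\eqref{eq-prophi} with $|\nabla U_\alpha|\le(\beta/\sin\alpha)U_\alpha$, the squared $L^2$ norm is controlled by a constant multiple of $\iint_{B_1\cup B_2}U_\alpha^2\,dx$. The only slightly delicate point, and the motivation for the elaborate design of $\varphi_{\alpha,\ell}$ in~\eqref{eq-pl}, is the annular band $B_2$: inside $S_\alpha$ one has $x_1\ge|x|\cos\alpha$, so $|x|\ge\ell-2$ forces $x_1\ge(\ell-2)\cos\alpha$, producing the same exponential threshold $e^{-2\beta\ell\cot\alpha}$ for $U_\alpha^2$ as in $B_1$. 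Each band having area $O(\ell)$ yields the claimed $O(\ell e^{-2\beta\ell\cot\alpha})$.
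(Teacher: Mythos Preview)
Your argument is correct and is exactly the ``simple direct computation'' that the paper alludes to but does not spell out; the paper gives no proof beyond the one-line reference to the properties~\eqref{eq-prophi}. One cosmetic point: in the norm estimate, once you invoke the paper's stated property that $\varphi_{\alpha,\ell}=1$ on $\{x_1\le\ell\cos\alpha-2\}\cap S_\alpha$, the extra condition $|x|\le\ell-1$ is redundant (in $S_\alpha$ the first inequality already forces $|x|\le\ell-2/\cos\alpha\le\ell-2$), so the support of $1-\varphi_{\alpha,\ell}^2$ lies in $\{x_1\ge\ell\cos\alpha-2\}$ directly.
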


Now let us choose the maximal constant $\tau >1$ such that the two isosceles triangles
$\Theta_1(\tau L)$ and $\Theta_2(\tau L)$
with the side length $\tau L$ and the vertex angle $\omega$ spanned at the boundary of
$\Omega$ near respectively $A_1$ and $A_2$ are included in $\Omega$.
More precisely,
\begin{equation}\label{deftau}
\tau :=\begin{cases}
\dfrac{1}{\cos\omega}, & \omega\in\Big(0,\dfrac\pi 3\Big)\,,\\[\medskipamount]
2, & \omega\in \Big[\dfrac\pi 2,\pi\Big)\,
\end{cases}
\end{equation}
see Figure~\ref{fig4}.

\begin{figure}
\centering
\myimage{40mm}{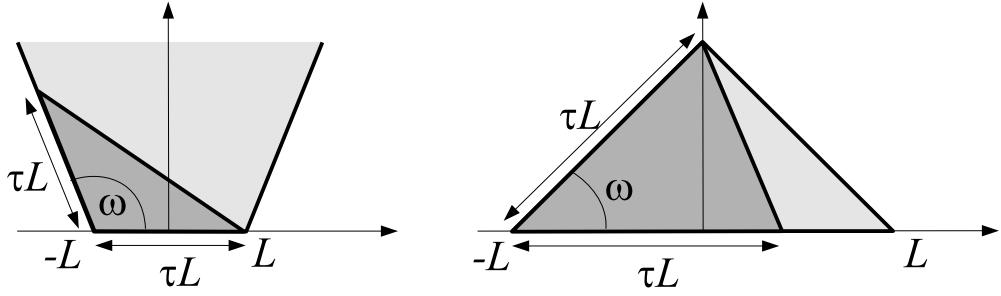}
\caption{The choice of the constant $\tau $.}\label{fig4}
\end{figure}

Consider the functions
\[
\psi_j(x)=v_j(x) U_j(x) \quad \text{with} \quad v_j(x):=\varphi_{\alpha,\tau L}(R_j x)\,,
\quad j=1,2\,.
\]

By Proposition~\ref{corol13} we can find $\delta>0$ such that
the interval $I:=(E_\alpha-\delta,E_\alpha+\delta)$
contains exactly two eigenvalues of $H_L$ and  the larger interval
$(E_\alpha-2\delta,E_\alpha+2\delta)$
does not contain any further spectrum for large $L$.

Let $E$ denote the subspace spanned by $\psi_j$, $j=1,2$,
and $F$ denote the spectral subspace of $H_L$ corresponding to $I\,$.
We are going to estimate the distances $d(E,F)$ and $d(F,E)$ between these two subspaces,
see Subsection~\ref{ss21}.

\begin{lemma}\label{gram}
For the Gramian matrix  $G:=(g_{jk})=\big(\langle \psi_j,\psi_k\rangle\big)$ we have
\[
g_{jk}= \delta_{jk} +O(L e^{-2\beta L\cot\alpha})\,, \quad j,k=1,2.
\]
 Furthermore, $g_{11}=g_{22}$
and $g_{12}=g_{21}\,$.
\end{lemma}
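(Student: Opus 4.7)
I would split the four matrix entries into the two symmetry-driven equalities and two quantitative estimates. Since $\psi_1$ and $\psi_2$ are real-valued, $g_{12}=g_{21}$ is automatic. For $g_{11}=g_{22}$, observe that $R_{2}(x_1,x_2)$ and $R_{1}(-x_1,x_2)$ have the same first coordinate and opposite second coordinates; as both $U_\alpha$ and $\varphi_{\alpha,\ell}$ depend only on the first coordinate and on $|x|$, this gives $\psi_{2}(x_1,x_2)=\psi_{1}(-x_1,x_2)$. Since the domain $\Omega_L$ is invariant under $x_1\mapsto -x_1$, the equality $g_{11}=g_{22}$ follows by change of variables.

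For the diagonal entry $g_{jj}$ the plan is to apply the rigid motion $y=R_j x$, whose Jacobian is one and which sends $\Sigma_j$ isometrically onto $S_\alpha$. This yields $g_{jj}=\int_{R_j(\Omega)}\varphi_{\alpha,\tau L}^{2}(y)\,U_\alpha^{2}(y)\,dy$. The crucial geometric observation is that $\supp\varphi_{\alpha,\tau L}\subset\{y_1\le \tau L\cos\alpha\}\cap S_\alpha$, and by the very choice of $\tau$ in \eqref{deftau} this set is the $R_j$-image of $\Theta_j(\tau L)\subset\Omega$. Hence the integration can be harmlessly extended to all of $S_\alpha$, and one recognizes $g_{jj}=\|\psi_{\alpha,\tau L}\|^{2}_{L^{2}(S_\alpha)}=1+O\bigl(\tau L\,e^{-2\beta\tau L\cot\alpha}\bigr)$ directly from Lemma~\ref{lem-uu}. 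Since $\tau$ is a fixed constant and $\tau>1$, this remainder is dominated by $O(L\,e^{-2\beta L\cot\alpha})$.

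For the off-diagonal entry I would use the explicit exponentials in \eqref{eq-UUU}: the product telescopes as
\[
U_1(x)\,U_2(x)=\dfrac{2\beta^{2}\cos\alpha}{\sin^{3}\alpha}\,e^{-2\beta L\cot\alpha}\,e^{-2\beta x_2},
\]
so that $|g_{12}|\le C\,e^{-2\beta L\cot\alpha}\iint_{\Omega}v_1 v_2\,e^{-2\beta x_2}\,dx$. The intersection of the supports of $v_1$ and $v_2$ lies in $\Theta_1(\tau L)\cap\Theta_2(\tau L)$, whose projection onto the $x_1$-axis is an interval of length $O(L)$, while the $e^{-2\beta x_2}$ factor keeps the $x_2$-integration uniformly bounded. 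This yields $|g_{12}|=O(L\,e^{-2\beta L\cot\alpha})$. There is no real obstacle here; the only technical point is to verify the support compatibility $\supp\varphi_{\alpha,\tau L}\subset R_j(\Omega)$ so that Lemma~\ref{lem-uu} applies cleanly, and to exploit the clean separation of the exponentials in \eqref{eq-UUU} which isolates the prefactor $e^{-2\beta L\cot\alpha}$ from an integral that grows only polynomially in $L$.
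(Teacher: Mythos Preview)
Your proposal is correct and follows essentially the same approach as the paper: the symmetry identities are dispatched directly, the diagonal entries are reduced via the rigid motion $R_j$ to the $L^2(S_\alpha)$-norm estimate of Lemma~\ref{lem-uu} (using that $\supp\varphi_{\alpha,\tau L}$ lies inside $R_j(\Omega)$ by the choice of $\tau$), and the off-diagonal entry is handled by factoring the product $U_1U_2$ as in \eqref{eq-UUU} to isolate $e^{-2\beta L\cot\alpha}$ and then bounding the remaining integral by $O(L)$. Your writeup is somewhat more explicit than the paper's about why $g_{11}=g_{22}$ and why the residual integral is $O(L)$, but the substance is the same.
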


\begin{proof}
The identities for the coefficients follow from the considerations of symmetry.
It follows from Lemma~\ref{lem-uu} that
\[
\|\psi_j\|^2=1+O(Le^{-2\tau \beta L\cot\alpha})\quad \text{ for }  j=1,2\,.
\]
On the other hand, using the explicit expressions~\eqref{eq-UUU} for $U_j$\,,
we obtain
\[
\psi_1(x_1,x_2)\psi_2(x_1,x_2)=2\beta^2\dfrac{\cos\alpha}{\sin^3\alpha}
\varphi_{\alpha,\tau L}(R_1 x)\varphi_{\alpha,\tau L}(R_2 x)
\exp\big( -2\beta L\cot\alpha) \exp( - 2\beta x_2\big)\,.
\]
Using the properties~\eqref{eq-prophi} we have
\[
\langle \psi_1,\psi_2\rangle =O(L e^{-2\beta L\cot\alpha})\,.
\]
As $\tau >1$ by~\eqref{deftau}, this gives the result.
\end{proof}

\begin{lemma}\label{lem15}
For  large $L$ there holds 
\[
d(E,F)=d(F,E)=O(\sqrt L e^{-\beta \tau L \cot\alpha})\,.
\]
\end{lemma}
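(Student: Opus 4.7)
The approach is to apply Proposition~\ref{prop9} to $A = H_L$ with $n=2$, trial functions $\psi_1, \psi_2$, the interval $I=(E_\alpha - \delta, E_\alpha + \delta)$, and test values $\mu_1 = \mu_2 = E_\alpha$, and then to invoke Proposition~\ref{prop8} to upgrade the resulting estimate on $d(E,F)$ to the symmetric statement. The entire construction of the functions $\psi_j$ in Subsection~\ref{ss-cutoff} has been tailored so that each hypothesis of Proposition~\ref{prop9} can be verified cleanly.

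The main step is to estimate $\varepsilon := \max_j \|(H_L - E_\alpha)\psi_j\|$. The crucial geometric observation is that the support of $\psi_j = v_j U_j$ lies in the isosceles triangle $\Theta_j(\tau L)$, which by~\eqref{deftau} is contained in $\Omega$, and on whose two sides emanating from $A_j$ the boundary of $\Omega$ coincides with $\partial \Sigma_j$. Combined with the Robin-preserving nature of the cutoff recorded in~\eqref{eq-prophi}, this shows that $\psi_j \in D(H_L)$ with $H_L \psi_j = -\Delta \psi_j$. Since $R_j$ is a rigid motion and $\psi_j = \psi_{\alpha,\tau L} \circ R_j$, the isometric invariance of the $L^2$ norm and of $\Delta$ give
\[
\|(H_L - E_\alpha)\psi_j\|_{L^2(\Omega)}^2 = \|(-\Delta - E_\alpha)\psi_{\alpha,\tau L}\|_{L^2(S_\alpha)}^2 = O\big(L\,e^{-2\beta \tau L \cot\alpha}\big)
\]
by~\eqref{eq-ulapl}, hence $\varepsilon = O(\sqrt{L}\, e^{-\beta \tau L \cot\alpha})$.

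The remaining ingredients of Proposition~\ref{prop9} are almost immediate. Lemma~\ref{gram} tells us that the Gramian of $(\psi_1, \psi_2)$ converges to the identity exponentially fast, so for large $L$ its smallest eigenvalue satisfies $\Lambda \ge 1/2$ (and in particular $\dim E = 2$); by the choice of $\delta$ made via Proposition~\ref{corol13} one has $\dist(I, \spec H_L \setminus I) \ge \delta$, so $a \ge \delta/2$ is a positive constant independent of $L$. Substituting these bounds into~\eqref{eq-def} yields $d(E,F) = O(\sqrt{L}\, e^{-\beta \tau L \cot\alpha})$.

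Finally I would deduce $d(F,E) = d(E,F)$ using that $\dim F = 2$ by Proposition~\ref{corol13}. The bound just obtained gives $d(E,F) < 1$ for large $L$, so Proposition~\ref{prop8}(3) ensures that $P_F|_E : E \to F$ is injective, hence bijective since the two subspaces have the same finite dimension. If a unit vector $f \in F$ satisfied $P_E f = 0$, writing $f = P_F e$ for some $e \in E$ would give $\|f\|^2 = \langle P_F e, e\rangle = \langle f, e\rangle = \langle P_E f, e\rangle = 0$, a contradiction; thus $d(F,E) < 1$ as well and Proposition~\ref{prop8}(4) closes the argument. The only delicate point in the whole plan is the identification $(H_L - E_\alpha)\psi_j \equiv (-\Delta - E_\alpha)\psi_{\alpha,\tau L} \circ R_j$, whose validity rests precisely on the choice of $\tau$ in~\eqref{deftau} and the explicit form of the cutoff in~\eqref{eq-pl}.
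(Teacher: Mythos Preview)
Your argument is correct. The first half---bounding $d(E,F)$ via Proposition~\ref{prop9} with $\mu_1=\mu_2=E_\alpha$ and invoking Lemmas~\ref{lem-uu} and~\ref{gram}---is exactly what the paper does.

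The second half, however, is genuinely different. To obtain $d(F,E)<1$ the paper takes a much heavier route: it picks normalized eigenfunctions $u_1,u_2$ spanning $F$, applies the Agmon-type decay estimate of Proposition~\ref{lem-agmon} to localize each $u_k$ near the corners, and then approximates the localized pieces $\chi_j u_k$ by multiples of the model eigenfunctions $U_j$ (hence of $\psi_j$) using the spectral gap of the sector operators $M_j$. This yields an explicit representation $u_k=\sum_j a_{jk}\psi_j+O(e^{-\sigma_* L})$ and hence $d(F,E)=O(e^{-\sigma_* L})$. You bypass all of this with a clean finite-dimensional observation: since Proposition~\ref{corol13} gives $\dim F=2=\dim E$ and $d(E,F)<1$ forces $P_F|_E$ to be bijective, the identity $\|P_F e\|^2=\langle P_F e,e\rangle=\langle P_E P_F e,e\rangle$ shows $P_E|_F$ is injective, whence $d(F,E)<1$ and Proposition~\ref{prop8}(4) applies. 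Your route is shorter and avoids any use of Proposition~\ref{lem-agmon} for this lemma; the paper's route, on the other hand, produces the quantitative approximation of the true eigenfunctions by the $\psi_j$, which is of independent interest even though it is not strictly needed downstream.
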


\begin{proof}
Let us show first the desired estimate for $d(E,F)$.
By Lemma~\ref{lem-uu}, we have
\[
\big\|(H_L-E_\alpha)\psi_j\big\|= O(\sqrt L e^{-\beta \tau L\cot\alpha})\,.
\]
Using Proposition~\ref{prop9} for the previously chosen interval $I$ and
applying Lemma~\ref{gram} gives the result.

We will now show that $d(F,E)<1$ for large $L$, then by Proposition~\ref{prop8}
it will follow that $d(F,E)=d(E,F)$. 

Let $\varphi:\RR\to\RR$ be a $C^\infty$ function
such that $\varphi(t)=1$ for $t$ near $0$ and $\varphi(t)=0$ for $t >\frac 1 2$ and
introduce
\[
\chi_j(x):=\varphi\Big(\dfrac{|x-A_j|}{L}\Big), \quad j=1,2\,,
\quad 
\chi_0:=1-\chi_1 - \chi_2\,.
\]
Let $u_k$ be a normalized eigenfunction of $H_L$ associated with  $E_k(L)\,$, $k=1,2$.
We know (Proposition~\ref{corol13}) that $E_k(L)$ tends to $E_\alpha$ as $L\to+\infty$, so Proposition~\ref{lem-agmon}
is applicable to $u_k$. In particular, for some $\sigma>0$ we have
\[
\|\chi_0 u_k\|_{L^2(\Omega)}=O(e^{-\sigma L})\,.
\]
 Furthermore, using Proposition~\ref{prop-dom}
we check that $\chi_j u_k\in D(H_L)$ and that
\begin{multline*}
\big\|(H_L - E_\alpha) (\chi_j u_k)\big\|_{L^2(\Omega)} =
\big\|(-\Delta - E_\alpha) (\chi_j u_k)\big\|_{L^2(\Omega)}\\
=\big\|-(\Delta \chi_j)u_k -2\nabla \chi_j\nabla u_k\big\|_{L^2(\Omega)}
=O(e^{-\sigma' L})\,,
\end{multline*}
for some $\sigma'>0\,$, and by taking the minimum we may assume that $\sigma=\sigma'\,$.
The last estimate can be also rewritten as an estimate in $L^2(\Sigma_j)$, and we conclude that
there exists $L_*>0$ and $C>0$ such that
\[
 \big\|(-\Delta - E_{\alpha}) (\chi_ju_k)\big\|_{L^2(\Sigma_j)}\le C\, e^{-\sigma L}
\]
for $L>L_*\,$.

Now let us pick any $\sigma_0\in (0,\sigma)$ and split the set $\{L:L>L_*\}$
into two disjoint parts $I_1$ and $I_2$ as follows. We say that $L\in I_1$ if
$\|\chi_j u_k\|_{L^2(\Omega)}\equiv\|\chi_j u_k\|_{L^2(\Sigma_j)}\le e^{-\sigma_0 L}\,$.
Therefore, for $L\in I_2$ we have $\|\chi_j u_k\|_{L^2(\Sigma_j)}\ge e^{-\sigma_0 L}\,$.
We check again that $\chi_j u_k\in D(M_j)\,$, so by applying
Proposition~\ref{prop8} to the operator $M_j$ we conclude that
\[
d\big(\linspan(\chi_j u_k), \ker(M_j-E_\alpha)\big)\le C_0 \, e^{-(\sigma-\sigma_0)L}\,, \quad C_0>0\,,
\]
which means that one can find $a_{jk}\in\RR$ such that
\[
\|\chi_j u_k-a_{jk}U_j\|_{L^2(\Sigma_j)}\le C_0 \, e^{-(\sigma-\sigma_0)L}\,,
\]
and
\[
|a_{jk}|\le 1+  C_0\,  e^{-(\sigma-\sigma_0)L}\,.
\]
 On the other hand, one can find $\sigma_1>0$ such that
\[
\|U_j-\psi_j\|_{L^2(\Omega)}\equiv\|U_j-\psi_j\|_{L^2(\Sigma_j)}=
\|(1-v_j)U_j\|_{L^2(\Sigma_j)}
\le C_1\,  e^{-\sigma_1L}\,.
\]
Therefore, writing $\sigma_2:=\min(\sigma_1,\, \sigma-\sigma_0)\,$, we have
\[
\|\chi_j u_k-a_j\psi_j\|_{L^2(\Omega)}=\|\chi_j u_k-a_{jk}\psi_j\|_{L^2(\Sigma_j)}\le C_2 \, e^{-\sigma_2L}
\text{ for all } L\in I_2\,.
\]
By choosing $\sigma_*:=\min(\sigma_0,\sigma_2)\,$, we conclude that, for any sufficiently large $L$,
we can find $a_j\in\RR$ with $|a_j|\le 1+ O(e^{-\sigma_*L})\,$, such that
\[
\|\chi_j u_k-a_{jk}\psi_j\|_{L^2(\Omega)}=O(e^{-\sigma_*L})\,.
\]
For $L\in I_1$ we can simply take $a_{jk}=0\,$.
We have then
\[
u_k=\sum_{j=0}^2 \chi_j u_k = \sum_{j=1}^2 a_{jk}\psi_j+O(e^{-\sigma_*L}) \text{ in } L^2(\Omega)\,.
\]
As the functions $u_k$, $k=1,2$,
form an orthonormal basis in $F$, we have $d(F,E)=O(e^{-\sigma_*L})<1$ for large $L$.
\end{proof}

\subsection{Coupling between corners}

Recall that $P_E$ denotes the orthogonal projection on $E$
in $L^2(\Omega)$. In addition, we denote by $\Pi_E$  the projection on $E$ in $L^2(\Omega)$
along $F^\perp$. The following lemma essentially reproduces Lemma~2.8 in \cite{hs1}.  We give the proof
for the sake of completeness.

\begin{lemma}\label{lem16}
For sufficiently large $L$ we have 
\[
\|\Pi_E-P_E\|=O(\sqrt{L} e^{-\beta \tau  L \cot\alpha})\,.
\]
Furthermore, we have the following identities:
\begin{itemize}
\item[(a)] $\Pi_E=\Pi_E P_F$\,,
\item[(b)] the inverse of $K:=(\Pi_E:F\to E)$ is $K^{-1}:=(P_F:E\to F)\,$,
\item[(c)] $(H_L:F\to F)=K^{-1} (\Pi_E H_L:E\to E) K\,$.
\end{itemize}
\end{lemma}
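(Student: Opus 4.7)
The plan is to deduce all four statements from Lemma~\ref{lem15}, which yields $d(E,F),d(F,E)=O(\sqrt{L}\,e^{-\beta\tau L\cot\alpha})$ and in particular forces both distances to be strictly less than $1$ for large $L$. Once this is in hand, parts (a)--(c) are purely algebraic consequences of the definition of $\Pi_E$, and the norm bound reduces to a two-term estimate controlled by $d(E,F)$ and $d(F,E)$.

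First I would check that $\Pi_E$ is well defined. Proposition~\ref{prop8}(3) with $d(E,F)<1$ gives injectivity of $P_F|_E$; since $\dim E=\dim F=2$ (the latter from Proposition~\ref{corol13}), this map is a bijection $E\to F$. Consequently $E\cap F^{\perp}=\ker P_F|_E=\{0\}$, and every $u\in L^2(\Omega)$ admits a unique decomposition $u=e+g$ with $e:=(P_F|_E)^{-1}P_Fu\in E$ and $g:=u-e\in F^{\perp}$; this makes $\Pi_E u:=e$ well defined. Two identities used throughout follow at once: $\Pi_E$ vanishes on $F^{\perp}$ and $u-P_Fu\in F^{\perp}$, so $\Pi_E=\Pi_E P_F$, which is (a); symmetrically, $u-\Pi_E u\in F^{\perp}$ gives $P_F=P_F\Pi_E$. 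For (b): for $f\in F$ the decomposition reads $f=\Pi_E f+(f-\Pi_E f)$ with $f-\Pi_E f\in F^{\perp}$, whence $P_F\Pi_E f=f$; conversely, for $e\in E$ identity (a) gives $\Pi_E P_F e=\Pi_E e=e$. Hence $K=\Pi_E|_F$ and $P_F|_E$ are mutual inverses.

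For the norm estimate I would decompose $u=P_Fu+(I-P_F)u$, so that
\[
(P_E-\Pi_E)u=(P_E-\Pi_E)P_Fu+P_E(I-P_F)u.
\]
The second term is bounded in norm by $\|P_E-P_EP_F\|\,\|u\|=d(E,F)\,\|u\|$. The first term lies in $E$, and applying $P_F|_E$ produces $P_FP_EP_Fu-P_Fu$, of norm at most $\|P_F-P_FP_E\|\,\|u\|=d(F,E)\,\|u\|$; since for $e\in E$ one has $\|P_Fe\|\ge(1-d(E,F))\|e\|$ and thus $\|(P_F|_E)^{-1}\|\le(1-d(E,F))^{-1}$ stays bounded, the first term is itself $O(d(F,E))\,\|u\|$. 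Adding the two contributions yields the claimed bound.

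Finally, for (c) I would use that $F$, being a spectral subspace of the self-adjoint operator $H_L$, is reducing: both $F$ and $F^{\perp}$ are $H_L$-invariant, so $P_FH_L=H_LP_F$. For $f\in F$ write $f=\Pi_E f+(f-\Pi_E f)$ with the second summand in $F^{\perp}$; then $H_L(f-\Pi_E f)\in F^{\perp}$, so $P_FH_Lf=P_FH_L\Pi_E f$. Combined with $P_F=P_F\Pi_E$, this gives $H_Lf=P_F\Pi_E H_L\Pi_E f=K^{-1}(\Pi_E H_L|_E)Kf$. The only slightly delicate point in the whole argument is the norm bound, where one must track both $d(E,F)$ and $d(F,E)$ and verify that $(P_F|_E)^{-1}$ remains uniformly bounded; everything else is formal unravelling of definitions.
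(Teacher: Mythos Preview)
Your proof is correct. Parts (a)--(c) are handled essentially as in the paper (the paper's treatment is terser and contains a couple of evident typos, but the underlying manipulations are the same: $\Pi_E P_{F^\perp}=0$ gives (a), composing $P_F$ and $\Pi_E$ in both orders gives (b), and invariance of $F^\perp$ under $H_L$ gives (c)).

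The one genuine methodological difference is in the norm estimate $\|\Pi_E-P_E\|$. The paper parametrizes $F$ as a graph over $E$: it writes $F=\{x+Ax:x\in E\}$ with $A:E\to E^\perp$ satisfying $\|A\|=O(\sqrt{L}\,e^{-\beta\tau L\cot\alpha})$, identifies $F^\perp=\{y-A^*y:y\in E^\perp\}$, and then solves explicitly for $\Pi_E z$ to obtain $\|(\Pi_E-P_E)z\|\le\|A\|\,\|z\|$ in one line. Your argument instead decomposes through $P_F$ and bounds the two pieces by $d(E,F)$ and $d(F,E)/(1-d(E,F))$ respectively, using that $(P_F|_E)^{-1}$ is uniformly bounded. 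Both routes give the same order; the paper's graph parametrization is slightly more economical (a single constant $\|A\|$ rather than two distances and an inverse bound), while your approach has the advantage of working directly from the subspace distances already furnished by Lemma~\ref{lem15} without introducing an auxiliary operator.
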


\begin{proof}
By Lemma~\ref{lem15} we can write $F=\{x+Ax: x\in E\}$, where $A$ is a bounded linear operator
acting from $E$ to $E^\perp$ with $\|A\|=O(\sqrt{L} e^{-\beta c L \cot\alpha})$. Then $F^\perp=\{y-A^*y:y\in E^\perp\}$. Furthermore, if $z=x+y$ with $x\in E$ and $y\in E^\perp$, then $P_E z=x$ and $\Pi_E z=\Tilde x$, where
$\Tilde x$ is the vector from $E$ satisfying $\Tilde x -(x+y)\in F^\perp$, which can be rewritten
as $\Tilde x-(x+y)=A^* \Tilde y- \Tilde y$ for some $\Tilde y\in E^\perp$. Considering separately the terms
in $E$ and $E^\perp$ we arrive at the system
$\Tilde x-x=A^*\Tilde y$, $y=\Tilde y$,
which implies 
\[
\big\|(P_E-\Pi_E)z \big\|=\|x-\Tilde x\|\le \|A\|\cdot \|y\|\le \|A\| \cdot\|z\|
\]
and proves the norm estimate. 

Let us check the identities. To prove (a) we write
$\Pi_E=\Pi_E (P_F+P_{F^\perp})$ and note that $\Pi_E P_{F^\perp}=0$.
To prove (b), we observe  first that the existence of the inverses
follows from Proposition~\ref{prop8}.
Now let us take any $z\in F$. It is uniquely represented
as $z=x+y$ with $x\in E$ and $y\in F^\perp$, and $P_E z=x$.
On the other hand, one has $\Pi_F x=z$, which proves the identity (b).

Furthermore, $\Pi_E H_L=\Pi_E H_L(P_F + P_{F^\perp})=\Pi_E H_L P_F +\Pi_E P_{F^\perp}H_L\,$.
Using again $\Pi_E P_{F^\perp}=0\,$,  we conclude that $\Pi_E H_L u= \Pi_E H_L P_F u$ for any $u \in E\,$.
Finally, as $H_L P_F u \in F$ for any $u\in E$, we have
\[
(\Pi_E H_L:E\to E)= (\Pi_E:F\to E) (H_L:F\to F) (P_F:E\to F)\,.
\]
Combining with (b) leads to  (c).
\end{proof}

\begin{lemma}\label{lem17}\label{prop-m}
The matrix $M$ of $\Pi_E H_L:E\to E$ in the basis $(\psi_1,\psi_2)$ is
\[
M=\begin{pmatrix}
E_\alpha & w_{12}\\
w_{21} & E_\alpha
\end{pmatrix}
+O(L^{3/2}e^{-2\beta \tau L\cot\alpha})\,, \quad L\to+\infty,
\]
where we denote
\[
w_{jk}:=\iint_\Omega v_k (U_j \nabla U_k - U_k \nabla U_j)\nabla v_j\, dx\,.
\]
\end{lemma}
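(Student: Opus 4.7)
The plan is to compute the entries of $M$ via the identity $GM=\widetilde M$, where $\widetilde M_{jk}:=\langle \psi_j,\Pi_E H_L\psi_k\rangle$, and then invert the Gramian using Lemma~\ref{gram}. First I would check that each $\psi_k=v_kU_k$ lies in $D(H_L)$: within $\supp v_k$ the boundary $\partial\Omega$ locally coincides with $\partial\Sigma_k$, where $\partial v_k/\partial n=0$ by \eqref{eq-prophi} and $\partial U_k/\partial n=\beta U_k$, so the Robin condition holds; on the remaining edges of $\partial\Omega$, the choice \eqref{deftau} of $\tau$ implies that $v_k\equiv 0$ in a neighborhood of those edges (the geometric dichotomy reappears below). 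Using $-\Delta U_k=E_\alpha U_k$ this gives
\[
H_L\psi_k=E_\alpha\psi_k+r_k,\qquad r_k:=-(\Delta v_k)U_k-2\nabla v_k\cdot\nabla U_k,
\]
so that $\widetilde M_{jk}=E_\alpha g_{jk}+\langle\psi_j,\Pi_E r_k\rangle$. Decomposing $I=\Pi_E+\Pi_{F^\perp}$ and using $\Pi_{F^\perp}r_k\in F^\perp$, $\|\Pi_{F^\perp}\|=O(1)$, Lemma~\ref{lem15}, and Lemma~\ref{lem-uu} (which yields $\|r_k\|=O(\sqrt{L}\,e^{-\beta\tau L\cot\alpha})$), one obtains $\langle\psi_j,\Pi_E r_k\rangle=\langle\psi_j,r_k\rangle+O(Le^{-2\beta\tau L\cot\alpha})$.

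The heart of the proof is to identify $\langle\psi_j,r_k\rangle$ with $w_{jk}$ up to an admissible error. Starting from
\[
\langle\psi_j,r_k\rangle=-\iint v_jU_jU_k\Delta v_k\,dx-2\iint v_jU_j\nabla v_k\cdot\nabla U_k\,dx,
\]
a first integration by parts of the first term (the boundary contribution vanishes since $\partial v_k/\partial n=0$ on $\partial\Omega$) produces
\[
\langle\psi_j,r_k\rangle=\iint U_jU_k\,\nabla v_j\cdot\nabla v_k\,dx+\iint v_j(U_k\nabla U_j-U_j\nabla U_k)\cdot\nabla v_k\,dx.
\]
A second integration by parts, exploiting the divergence identity $\nabla\cdot(U_k\nabla U_j-U_j\nabla U_k)=U_k\Delta U_j-U_j\Delta U_k=0$, transforms the last integral into $w_{jk}$ plus a boundary term $\int_{\partial\Omega}v_jv_k(U_k\partial_nU_j-U_j\partial_nU_k)\,ds$. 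The main obstacle is to show this boundary integral vanishes. The relevant dichotomy is that each edge $e$ of $\partial\Omega$ is either (a) contained in $\partial\Sigma_j\cap\partial\Sigma_k$ (e.g.\ the segment $A_1A_2$ in the biangle case), in which case both $U_j$ and $U_k$ satisfy $\partial_nU=\beta U$ on $e$ and the integrand equals $U_k\beta U_j-U_j\beta U_k=0$ pointwise; or (b) contained in $\partial\Sigma_j\setminus\partial\Sigma_k$, in which case a direct calculation parameterizing $e$ from $A_j$ by arclength $t$ yields $[R_kx]_1=2L\cos\alpha-t\cos(3\alpha)$, and \eqref{deftau} forces this quantity to strictly exceed $\tau L\cos\alpha$ on the interior of $e$, whence $v_k\equiv 0$ there. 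In both cases the boundary integral vanishes, and the overlap term is bounded by Cauchy--Schwarz as $\|U_j\nabla v_j\|\,\|U_k\nabla v_k\|=O(Le^{-2\beta\tau L\cot\alpha})$, using the pointwise bound $U_j\le Ce^{-\beta\tau L\cot\alpha}$ on $\supp\nabla v_j$.

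Combining the above gives $\widetilde M=E_\alpha G+W+O(Le^{-2\beta\tau L\cot\alpha})$, where $W$ is the matrix with entries $w_{jk}$ (note $w_{kk}=0$ by antisymmetry of $U_j\nabla U_k-U_k\nabla U_j$ under swap $j\leftrightarrow k$). To close, I would invert the Gramian via $G^{-1}=I+O(Le^{-2\beta L\cot\alpha})$ from Lemma~\ref{gram}, obtaining
\[
M=G^{-1}\widetilde M=E_\alpha I+W+(G^{-1}-I)W+G^{-1}\cdot O(Le^{-2\beta\tau L\cot\alpha}).
\]
Since $\tau\le 2$ and $|w_{jk}|=O(e^{-2\beta L\cot\alpha})$ (obtained from the explicit exponential decay of $U_jU_k$ on the thin strip $\supp\nabla v_j$), the correction $(G^{-1}-I)W$ is of size $O(Le^{-2\beta(\tau+1)L\cot\alpha})$, which together with $G^{-1}\cdot O(Le^{-2\beta\tau L\cot\alpha})=O(Le^{-2\beta\tau L\cot\alpha})$ is comfortably absorbed into $O(L^{3/2}e^{-2\beta\tau L\cot\alpha})$, yielding the statement.
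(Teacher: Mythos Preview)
Your argument is correct and reaches the same conclusion, but the organization differs from the paper's. You compute the Gram-weighted entries $\widetilde M_{jk}=\langle\psi_j,\Pi_E H_L\psi_k\rangle$ and then invert $G$; this produces $\langle\psi_j,r_k\rangle$, which after one integration by parts equals $\varepsilon_{jk}+\iint v_j(U_k\nabla U_j-U_j\nabla U_k)\cdot\nabla v_k$, i.e.\ $w_{kj}$ rather than $w_{jk}$. To land on the stated matrix you therefore perform a \emph{second} integration by parts, exploiting $\nabla\cdot(U_k\nabla U_j-U_j\nabla U_k)=0$, and must then dispose of the resulting boundary integral by your geometric dichotomy (which is correct; note that your case~(b) should be read symmetrically in $j,k$, since the factor $v_jv_k$ kills the integrand on edges in $\partial\Sigma_k\setminus\partial\Sigma_j$ as well). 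The paper avoids this extra step: it approximates $\Pi_E$ by the ``naive'' projector $\widehat\Pi u=\sum_k\langle\psi_k,u\rangle\psi_k$ via Lemma~\ref{lem16}, so that the matrix coefficient is directly $b_{jk}=\langle\psi_k,r_j\rangle$, and a \emph{single} Green--Riemann identity (using only $\partial_n v_j=0$) yields $b_{jk}=w_{jk}+\varepsilon_{jk}$ with no further boundary term to discuss. Your route bypasses Lemma~\ref{lem16} in favor of the splitting $I=\Pi_E+\Pi_{F^\perp}$ together with Lemma~\ref{lem15}, which is a legitimate alternative. One small slip: the correction $(G^{-1}-I)W$ is $O\!\big(Le^{-4\beta L\cot\alpha}\big)$, not $O\!\big(Le^{-2\beta(\tau+1)L\cot\alpha}\big)$; since $\tau\le 2$ this is still absorbed by $O(L^{3/2}e^{-2\beta\tau L\cot\alpha})$, so the conclusion is unaffected.
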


\begin{proof}
The proof follows the scheme of Theorem~3.9 in~\cite{hs1}.
We have
\[
P_E u=\sum_{j,k=1}^2 c_{jk} \langle \psi_k, u\rangle \psi_j\,,
\]
where $c_{jk}$ are the coefficients satisfying
\[
\sum_{j,k=1}^2 c_{jk} \langle \psi_k, \psi_\ell \rangle \psi_j=\psi_\ell\,, \quad \ell =1,2\,,
\text{ i.e. }
\sum_{k=1}^2 c_{jk} \langle \psi_k, \psi_\ell \rangle=\delta_{jl}\,, \quad \ell =1,2\,.
\]
In other words, $(c_{jk})=G^{-1}$, where $G$ is the Gramian matrix of $(\psi_j)$, and in virtue
of Lemma~\ref{gram} we have 
\[
c_{jk}=\delta_{jk}+O(L e^{-2\beta L\cot\alpha})\,.
\]
 Therefore,
if we introduce another operator $\widehat \Pi$ by
$\widehat \Pi  u=\sum_{j=1}^2 \langle \psi_j,u\rangle \psi_j\,$,
we have 
\[
\|P_E-\widehat \Pi \|=O(L e^{-2\beta L\cot\alpha})\,.
\]
Combining with Lemma~\ref{lem16}
we obtain 
\[
\|\Pi_E-\widehat \Pi\|=O(L e^{-\beta\tau L\cot\alpha})\,.
\]
Here we used the inequality  $\tau \le 2\,$, see~\eqref{deftau}.

Now, using the structure of $\psi_j=v_j U_j$ we have
\[
H_L \psi_j = E_\alpha \psi_j -2\nabla v_j \nabla U_j -(\Delta v_j)U_j\,.
\]
The $L^2(\Omega)$-norms of two last terms on the right hand side are $O(\sqrt Le^{-\beta\tau L\cot\alpha})\,$,
which gives
\begin{equation}
       \label{eq-eap}
\begin{aligned}
\Pi_E H_L \psi_j&= \Pi_E\big(E_\alpha \psi_j \big) + \widehat \Pi \big(-2\nabla v_j \nabla U_j -(\Delta v_j)U_j\big)\\
&\quad+(\Pi_E-\widehat \Pi) \big(-2\nabla v_j \nabla U_j -(\Delta v_j)U_j\big)\\
&= E_\alpha \psi_j+ \widehat \Pi \big(-2\nabla v_j \nabla U_j -(\Delta v_j)U_j\big) +O(L^{3/2}e^{-2\beta \tau L\cot\alpha})\\
&= E_\alpha\psi_j + \sum_{k=1}^2  b_{jk} \psi_k +O(L^{3/2}e^{-2\beta \tau L\cot\alpha}).
\end{aligned}
\end{equation}
with
\[
b_{jk}:= -\iint_{\Omega} \big( 2\nabla v_j \nabla U_j +(\Delta v_j)U_j\big) \psi_k\, dx = 
-\iint_{\Omega} \big( 2\nabla v_j \nabla U_j +(\Delta v_j)U_j\big) v_k U_k\, dx.
\]
Using the Green-Riemann formula \eqref{eq-gr} we have
\begin{multline*}
\iint_{\Omega} (-\Delta v_j)U_j\big) v_k U_k \,dx=\iint_{\Omega}
\nabla v_j \nabla (U_j U_k v_k) \, dx -\int_{\partial \Omega} \dfrac{\partial v_j}{\partial n} U_j U_k v_k\, ds\\
=\iint_{\Omega} U_j U_k \nabla v_j \nabla v_k \,dx
+\iint_{\Omega} U_j v_k \nabla v_j \nabla U_k \,dx
+\iint_{\Omega} v_k U_k \nabla v_j \nabla U_j \,dx\,,
\end{multline*}
which gives
\begin{equation}
     \label{eq-bjk}
b_{jk}= \delta_{jk}w_{jk}+\varepsilon_{jk}, \quad \varepsilon_{jk}:=\iint_{\Omega} U_j U_k \nabla v_j \nabla v_k \, dx\,.
\end{equation}
Note that
\begin{equation}
       \label{eq-U1U2}
U_1(x_1,x_2)U_2(x_1,x_2)=\dfrac{2\beta^2\cos\alpha}{\sin^3\alpha}
\exp\big( -2\beta L\cot\alpha) \exp( - 2\beta x_2\big)
\end{equation}
and that $\nabla v_1 \nabla v_2$ is supported in a parallelogram of size $O(1)$
in which the value of $x_2$ is at least 
\[
S:=(\tau -1)L\cot\alpha -2/\sin\alpha\,,
\]
see Figure~\ref{fig5}. Therefore, 
\[
\varepsilon_{12}=\varepsilon_{21}=O(e^{-2 \tau \beta L\cot\alpha})\,.
\]
On the other hand, by Lemma~\ref{lem-uu} we have
\[
\varepsilon_{11}=\varepsilon_{22}=O(Le^{-2\beta \tau  L\cot\alpha})\,.
\]
Substituting these estimates into~\eqref{eq-bjk} and then into \eqref{eq-eap} leads to the conclusion.
\end{proof}

\begin{lemma}\label{lem-w}
There holds
\[
w:=w_{12}=w_{21}=\dfrac{2\beta^2 \cos^2\alpha}{\sin^4\alpha} e^{-2\beta L \cot\alpha}
+O\big(Le^{-2\beta \tau L\cot\alpha}\big)\,.
\]
\end{lemma}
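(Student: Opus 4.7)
The plan is to compute $w_{12}$ directly from the explicit formulas~\eqref{eq-UUU}. A short calculation based on $\partial_{x_1}U_1=-\beta\cot\alpha\,U_1$, $\partial_{x_1}U_2=\beta\cot\alpha\,U_2$, $\partial_{x_2}U_j=-\beta U_j$ yields the ``Wronskian'' identity
\[
U_1\nabla U_2-U_2\nabla U_1=2\beta\cot\alpha\cdot U_1U_2\cdot(1,0),
\]
and, combining with~\eqref{eq-U1U2}, one gets
\[
w_{12}=\frac{4\beta^{3}\cos^{2}\alpha}{\sin^{4}\alpha}\,e^{-2\beta L\cot\alpha}\,\mathcal{I},\qquad \mathcal{I}:=\iint_\Omega v_2\,(\partial_{x_1}v_1)\,e^{-2\beta x_2}\,dx.
\]
The equality $w_{12}=w_{21}$ will follow from the symmetry $\sigma(x_1,x_2):=(-x_1,x_2)$ of $\Omega$, which interchanges $v_1\leftrightarrow v_2$ and $U_1\leftrightarrow U_2$ while leaving the scalar $U_1U_2$ (hence the first component of the Wronskian) invariant: the substitution $x\mapsto\sigma x$ in the integral defining $w_{12}$ converts it directly into $w_{21}$.

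It remains to evaluate $\mathcal{I}$ with the required precision. The idea is to split the integration along the level $x_2=M$ with $M:=(\tau-1)L\cot\alpha-O(1)$, which is precisely the level at which the transition strip $\supp(\partial_{x_1}v_1)$ (located at axial distance $\tau L\cos\alpha$ from $A_1$) meets the transition strip $\supp(\nabla v_2)$ of $v_2$, cf.\ Figure~\ref{fig5} together with the definition~\eqref{deftau} of $\tau$. On the ``low'' part $\{x_2\le M\}$ one has $v_2\equiv 1$ on all of $\supp(\partial_{x_1}v_1)$ except in an $O(1)$-neighbourhood of the meeting point; both the latter and the ``high'' part $\{x_2>M\}$ are estimated crudely using $\|\partial_{x_1}v_1\|_\infty\cdot\|v_2\|_\infty=O(1)$, producing an error of size $O(L)\cdot e^{-2\beta M}=O\bigl(Le^{-2\beta(\tau-1)L\cot\alpha}\bigr)$ (the factor $L$ coming from the $x_2$-extent of $\supp(\partial_{x_1}v_1)$). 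Multiplied by the prefactor $e^{-2\beta L\cot\alpha}$ above, this becomes the claimed error bound $O\bigl(Le^{-2\beta\tau L\cot\alpha}\bigr)$.

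The remaining main integral $\iint_{\Omega\cap\{x_2\le M\}}(\partial_{x_1}v_1)\,e^{-2\beta x_2}\,dx$ is then computed by Fubini in $x_1$: the inner integral equals $v_1(x_1^{\mathrm{right}}(x_2),x_2)-v_1(x_1^{\mathrm{left}}(x_2),x_2)$, and inspection of $\varphi_{\alpha,\tau L}\circ R_1$ on the two portions of $\partial\Omega$, using~\eqref{eq-prophi}, gives respectively $0$ and $1$ up to exponentially small corrections as long as $x_2\ll L$. Integrating $e^{-2\beta x_2}$ over $[0,M]$ then yields $\pm\tfrac{1}{2\beta}+O(e^{-2\beta M})$, and multiplying by the prefactor produces the leading term $\tfrac{2\beta^{2}\cos^{2}\alpha}{\sin^{4}\alpha}\,e^{-2\beta L\cot\alpha}$. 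The main obstacle is the geometric bookkeeping in the second step: the triangle case ($\omega<\pi/3$, $\tau=\sec\omega$) and the biangle case ($\omega\in[\pi/2,\pi)$, $\tau=2$) must be treated uniformly, and one must show that the auxiliary radial cut-off $\chi(|R_1x|-(\tau L-1))$ appearing in $\varphi_{\alpha,\tau L}$ contributes only exponentially small corrections, both to the pointwise values of $v_1$ on $\partial\Omega$ and to the description of $\supp(\partial_{x_1}v_1)$ near the sides of the sector.
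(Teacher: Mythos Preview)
Your proposal is correct and follows essentially the same route as the paper's own proof. Both arguments compute the Wronskian identity $U_1\nabla U_2-U_2\nabla U_1=2\beta\cot\alpha\,(1,0)\,U_1U_2$, reduce $w_{12}$ to the scalar integral $\mathcal{I}=A=\iint_\Omega v_2\,(\partial_{x_1}v_1)\,e^{-2\beta x_2}\,dx$, split at the height $M=S=(\tau-1)L\cot\alpha-O(1)$ determined by Figure~\ref{fig5} (below which $v_2\equiv 1$ on $\supp\nabla v_1$), bound the high part crudely, and evaluate the low part by Fubini in $x_1$. The only cosmetic difference is that you phrase the inner $x_1$-integral via the boundary values $v_1(x_1^{\mathrm{right}},x_2)-v_1(x_1^{\mathrm{left}},x_2)$, whereas the paper simply asserts the value of that integral; your hedging with $\pm\tfrac{1}{2\beta}$ reflects a genuine sign ambiguity in the computation (the boundary values give $0-1=-1$, which is harmless since only $|w|$ enters the proof of Theorem~\ref{thm1}).
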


\begin{figure}
\centering
\myimage{60mm}{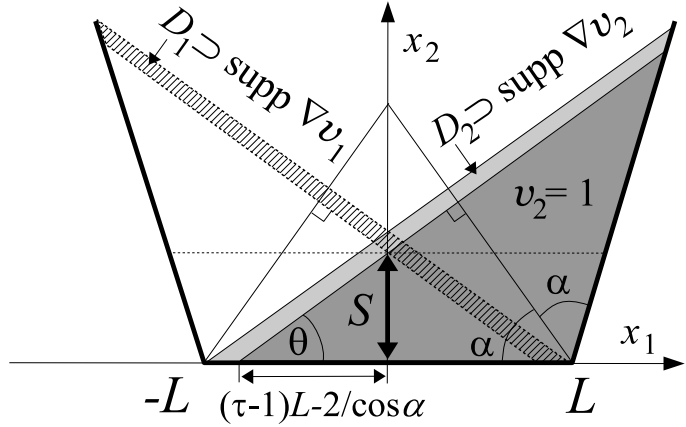}
\caption{Computation of~$S$. In the dark shaded domain there holds $v_2=1\,$, cf.~Figure~\ref{fig3}.
We have $\theta=\frac\pi 2-\alpha$ and, hence,
$S=\big((\tau -1)L-2\cos\alpha\big)\tan\theta\equiv (\tau -1)L\cot\alpha-2/\sin\alpha\,.$
\label{fig5}}
\end{figure}

\begin{proof}
The equality $w_{12}=w_{21}$ follows from the symmetry considerations.
Furthermore, we have the equality
\[
U_1\nabla U_2 - U_2\nabla U_1= 2\beta \cot \alpha \begin{pmatrix} 1\\ 0 \end{pmatrix} U_1 U_2\,.
\]
Substituting the expression for $U_1U_2$ from~\eqref{eq-U1U2} we obtain
\[
w_{12}=
\dfrac{4\beta^3 \cos^2\alpha}{\sin^4\alpha} e^{-2\beta L \cot\alpha} A\,, \quad
A:=
\iint_\Omega e^{-2\beta x_2} v_2  \dfrac{\partial v_1}{\partial x_1} \,dx\,.
\]
Using the explicit construction of $v_1$ and $v_2$ we can see that, for \break 
$x_2<S:=(\tau -1)L\cot\alpha -2/\sin\alpha\,$, we have the following property: if $(x_1,x_2)\in \supp\nabla v_1$, then $v_2(x_1,x_2)=1\,$, see Figure~\ref{fig5}.
This allows one to estimate $A$ by
\[
A=\iint_{\Omega\cap\{x_2\le S\}} e^{-2\beta x_2} \dfrac{\partial v_1 (x_1,x_2)}{\partial x_1} \,dx
+O\big(Le^{-2\beta(\tau -1)L\cot\alpha}\big)\,.
\]
On the other hand, by Fubini
\[
\iint_{\Omega\cap\{x_2\le S\}} e^{-2\beta x_2} \dfrac{\partial v_1 (x_1,x_2)}{\partial x_1} dx=
\int_0^S e^{-2\beta x_2} \bigg(\int \dfrac{\partial v_1(x_1,x_2)}{\partial x_1}dx_1\bigg) dx_2.
\]
The interior integral is equal to $1$ for any $x_2$, which finally gives
\[
A=\int_0^S e^{-2\beta x_2} dx_2+O\big(Le^{-2\beta(\tau -1)L\cot\alpha}\big)
=\dfrac{1}{2\beta}+O\big(Le^{-2\beta(\tau -1)L\cot\alpha}\big).
\]
\end{proof}

\begin{lemma}\label{lem-n}
The matrix $N$ of $\Pi_E H_L:E\to E$ in  the orthonormal basis
\[
\phi_k=\sum_{j=1}^2 \psi_j \sigma_{jk}, \quad k=1,2, \quad \sigma:=(\sigma_{jk}):=\sqrt{G^{-1}}\,,
\]
has the form
\[
N=N_0+O(L^2e^{-2\beta \tau  L\cot\alpha}) \quad \text{with} \quad
N_0=\begin{pmatrix}
E_\alpha & w\\
w & E_\alpha
\end{pmatrix}\,.
\]
Here $G$ is the Gramian matrix from Lemma~\ref{gram}.
\end{lemma}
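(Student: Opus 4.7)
The plan is to reduce the lemma to a change of basis applied to $M$ and then track the exponential prefactors of the error. Since $\phi_k=\sum_j \psi_j\sigma_{jk}$ with $\sigma:=G^{-1/2}$ (the symmetric square root), orthonormality of the $\phi_k$ in $L^2(\Omega)$ is equivalent to $\sigma^T G\sigma=I$. Standard linear algebra then gives
\[
N=\sigma^{-1}M\sigma,
\]
which is the identity I intend to analyze termwise.

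By Lemma~\ref{gram} one has $G=I+B$ with $\|B\|=O(Le^{-2\beta L\cot\alpha})$; consequently $\sigma=I+S$ and $\sigma^{-1}=I+S'$ with $\|S\|,\|S'\|=O(Le^{-2\beta L\cot\alpha})$. Combining Lemmas~\ref{lem17} and~\ref{lem-w} one writes $M=E_\alpha I+W+R$, where
\[
W=\begin{pmatrix}0 & w\\ w & 0\end{pmatrix},\qquad w=O(e^{-2\beta L\cot\alpha}),\qquad \|R\|=O(L^{3/2}e^{-2\beta\tau L\cot\alpha}).
\]
Since scalar multiples of $I$ are invariant under conjugation,
\[
N-N_0=(\sigma^{-1}W\sigma-W)+\sigma^{-1}R\sigma.
\]

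The first piece expands as $S'W+WS+S'WS$; all factors having been estimated, its norm is $O(Le^{-4\beta L\cot\alpha})$. Here the geometric constraint $\tau\leq 2$ from~\eqref{deftau} enters crucially: it yields $e^{-4\beta L\cot\alpha}\leq e^{-2\beta\tau L\cot\alpha}$, so this piece is $O(Le^{-2\beta\tau L\cot\alpha})$. For the second piece, $\|\sigma^{-1}R\sigma\|\leq\bigl(1+O(Le^{-2\beta L\cot\alpha})\bigr)^2\|R\|=O(L^{3/2}e^{-2\beta\tau L\cot\alpha})$. Adding both contributions and absorbing them into the looser bound $O(L^2 e^{-2\beta\tau L\cot\alpha})$ stated in the lemma completes the proof.

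The only step requiring care is the estimation of $\sigma^{-1}W\sigma-W$: without the inequality $\tau\leq 2$, the two exponentially small factors in $S'W$ (or $WS$) would not combine to something bounded by the target remainder. Apart from this observation, the argument is purely algebraic bookkeeping, so I expect no further obstacle.
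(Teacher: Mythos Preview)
Your proof is correct and follows the same change-of-basis computation $N=\sigma^{-1}M\sigma$ as the paper. The one point of difference is how the cross term $\sigma^{-1}W\sigma-W$ is disposed of: the paper expands $\sigma^{\pm1}=I\mp\tfrac12 T+O(L^2e^{-4\beta L\cot\alpha})$ and observes that the resulting commutator $\tfrac12(TW-WT)$ vanishes identically, because by Lemma~\ref{gram} the perturbation $T$ has the same symmetric structure $\begin{pmatrix}a&b\\ b&a\end{pmatrix}$ as $W$. You instead bound this term crudely by $O(Le^{-4\beta L\cot\alpha})$ and then invoke $\tau\le 2$ to absorb it into the remainder. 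Both arguments are valid; the paper's symmetry observation is a bit sharper but unnecessary here, while your route makes explicit why the constraint $\tau\le 2$ is relevant at this step.
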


\begin{proof}
Due to Lemma~\ref{gram} we have
$G= I +T$ with $T=O(Le^{-2\beta L\cot\alpha})$,
which shows that
\[
\sigma= I -\frac12 T+O(L^2e^{-4\beta L\cot\alpha})\,, \quad
\sigma^{-1}= I +\frac12 T+O(L^2e^{-4\beta L\cot\alpha})\,.
\]
On the other hand, using the matrix $M$ from Lemma~\ref{prop-m}, we have
$N=\sigma^{-1}M \sigma$. So we get
\begin{align*}
N&=\Big(I +\frac12 T +O(L^2e^{-4\beta L\cot\alpha}\Big)
\Big( E_\alpha + \begin{pmatrix} 0 &w\\
w & 0\end{pmatrix}
+O(L^{3/2}e^{-2\beta t L \cot \alpha})
\Big)\\
&\quad \times\Big(I -\frac12 T +O(L^2e^{-4\beta L\cot\alpha}\Big)\\
&= \begin{pmatrix}
E_\alpha & w\\
w & E_\alpha
\end{pmatrix}
+\dfrac 12 \bigg[
T
 \begin{pmatrix} 0 & w\\
w & 0
\end{pmatrix}
-
 \begin{pmatrix} 0 & w\\
w & 0
\end{pmatrix}
T
\bigg]
+O(L^2e^{-2\beta \tau  L\cot\alpha})\,.
\end{align*}
The term in the square brackets equals zero due to Lemma~\ref{gram},
and this achieves the proof.
\end{proof}

\begin{proof}[\bf Proof of Theorem~\ref{thm1}]
Now we are able to finish the proof  of the main theorem.
The eigenvalues of the matrix $N_0$ from Lemma~\ref{lem-n}
are $E_\pm:=E_\alpha\pm |w|$, and in view of Lemma~\ref{lem-w}
we have
\[
E_\pm=-\dfrac{\beta^2}{\sin^2\alpha}
\pm \dfrac{2\beta^2 \cos^2\alpha}{\sin^4\alpha} e^{-2\beta L \cot\alpha}
+O\big(Le^{-2\beta \tau L\cot\alpha}\big)\,.
\]
By Lemma~\ref{lem-w}, these numbers $E_\pm$
coincide up to $O(L^2e^{-2\beta \tau L\cot\alpha})$
with the eigenvalues of $H_L$ in $I$, which are exactly
$E_1(L)$ and $E_2(L)\,$. It remains
to apply elementary trigonometric identities to pass from $\alpha=\omega/2$
to $\omega\,$.
\end{proof}

\section{Conclusion}\label{rmk}
To conclude this article, let us add a few remarks.

\begin{remark}\label{rem41}
The family of  operators $H_L$ includes one case in which one can separate the variables,
namely, the case $\omega=\frac\pi2\,$, for which the estimate of Theorem~\ref{thm1} takes the form
\begin{equation}
         \label{1d-our}
E_{1/2}(L)=-2\beta^2 \mp 4\beta^2 e^{-2\beta L}+O(L^2e^{-4\beta L})\,.
\end{equation}
On the other hand, one can represent
$H_L=A\otimes 1 + 1\otimes B_L$, where $A$ and $B_L$ are operators
in $L^2(0,\infty)$ and $L^2(-L,L)$ respectively:
\begin{gather*}
A u=-u'', \quad D(A)=\big\{
u\in H^2(0,\infty): u'(0)+\beta u(0)=0
\big\}\,,\\
B_Lv=-v''\,, \quad
D(B_L)=\big\{
v\in H^2(-L,L): v'(-L)+\beta v(-L)=v'(L)-\beta v(L)=0
\big\}\,.
\end{gather*}
One easily computes 
\[
\spec A=\{-\beta^2\}\cup[0,+\infty)\,.
\]
 On the other hand,
$B_L$ has a compact resolvent and, if one denotes its eigenvalues by $\varepsilon_j(L)$,
then 
\[
E_j(L)=-\beta^2+\varepsilon_j(L)\,.
\]
The behavior of $\varepsilon_j(L)$, $j=1,2$, can be studied in a rather explicit way
by using the 1D nature of the problem, see Proposition~\ref{prop-robin1d} in the appendix,
and one gets
\[
E_{1/2}(L)= -2\beta^2\mp4\beta^2e^{-2\beta L}+8\beta^2(2\beta L-1)e^{-4\beta L}+O(L^2 e^{-6\beta L})\,,\\
\]
One observes that the remainder estimate in our asymptotics \eqref{1d-our} only
differs by the factor $L$ from the exact one.
\end{remark}

\begin{remark}
One can also consider the case $\omega=\frac\pi 3\,$, i.e. the case of the equilateral triangle.
In this case one has an interaction between the three corners. The above scheme
works in essentially the same way; see also~\cite{hs2} and \cite[Section 16.2]{hf} for the general discussion.
One can prove that, 
for sufficiently large $L$, there exists a bijection $\sigma$ between the three lowest eigenvalues
of $H_L$ and the three eigenvalues of the matrix 
\[
N_0=\begin{pmatrix}
E_\alpha & w & w\\
w & E_\alpha & w\\
w & w & E_\alpha
\end{pmatrix},
\quad
w=24\, \beta^2e^{-2\sqrt 3 L}\,,
\]
such that $\sigma (E)=E+O(L^2 e^{-4\sqrt 3\beta L})\,$.

Note that the eigenvalues of $N_0$ are $E_\alpha-w$ (simple)
and $E_\alpha+w$ (double), which means that the three
lowest eigenvalues of $H_L$ behave as
\begin{align*}
E_1(L)&=-4\, \beta^2-24\,\beta^2e^{-2\sqrt 3 L}+O(L^2 e^{-4\sqrt 3\beta L})\,,\\
E_j(L)&=-4\,\beta^2+24\, \beta^2e^{-2\sqrt 3 L}+O(L^2 e^{-4\sqrt 3\beta L})\,, \quad j=2,3\,,
\end{align*}
i.e. no splitting is visible between $E_2$ and $E_3\,$. Actually there is no surprise,
as a symmetry argument as well as the explicit formulas from~\cite[Section 7]{mc} show that
\[
E_2(L)=E_3(L).
\]
\end{remark}

\begin{remark}
One may see from the proof that the result admits direct extensions to a little bit more general domains.
Namely, assume that $\Omega=L\Omega'$ with some $L$-independent $\Omega'$ and such that
$\Omega$ coincides with $\Omega_L$ near the axis $Ox_1$ in the following sense:
one still can construct the triangles $\Theta_j(\tau L)$, $j=1,2$, as in~Subsection~\ref{ss-cutoff}
for some $\tau >1$, and $\Omega$ does not contain any further corner whose opening
is smaller or equal to $\omega$. Then Theorem~\ref{thm1} is valid
for the first two eigenvalues of $H(\Omega,\beta)$ with $\delta=2(\tau -1)$.
It would be interesting to know if any result of this kind can be obtained for more general domains
and more general relative positions of the corners. For the smooth domains, one may expect that
the role of the corners is played by the points of the boundary at which the curvature is maximal~\cite{exner,kp},
which gives rise to similar questions. This is actually the case for surface superconductivity, see \cite{hf} and references therein.
\end{remark}

\begin{remark}
Our considerations were in part stimulated by the paper~\cite{BND} which studies the asymptotic
behavior of the eigenvalues of the magnetic Neumann Laplacians in curvilinear polygons, but in our case
we were able to obtain a more precise result due to the fact that we know 
the exact eigenfunction of an infinite sector. One may wonder if 
our machinery can help to progress in the problem of~\cite{BND}.
We note that both the magnetic Neumann Laplacian
and the Robin Laplacian appear as approximate models in the theory of surface superconductivity
and are closely related to the computation of the critical temperature~\cite{gs1,hs1}.
\end{remark}

\appendix

\section{1D Robin problem}\label{1d}

In this section, we study the one-dimensional Robin problem. The expressions obtained have their own interest,
but some estimates can be used to obtain a better estimate for the analysis of the two-dimensional situation, as explained in Remark~\ref{rem41}.

\begin{lemma}\label{lem1dn}
For $\beta>0$ and $\ell>0$, denote by $N_{\beta,\ell}$ the operator
acting in $L^2(0,\ell)$ as $f\mapsto -f''$ on the functions
$f\in H^2(0,\ell)$ satisfying the boundary conditions
$f'(0)=0$ and $f'(\ell)=\beta f(\ell)$. Then the lowest eigenvalue
$E_N(\beta,\ell)$ is the unique strictly negative eigenvalue, and
\begin{equation}
   \label{eq-1dn}
E_N(\beta,\ell)=-\beta^2-4\beta^2e^{-2\beta \ell}+8\beta^2(2\beta\ell-1)e^{-4\beta\ell}+O(\ell^2 e^{-6\beta\ell})\,
\text{ as $\ell$ tends to $+\infty$\,,}
\end{equation}
and the associated eigenfunction is $x\mapsto \cosh(\sqrt{-E_N(\beta,\ell)} x)\,$.
\end{lemma}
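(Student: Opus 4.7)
The plan is to reduce the eigenvalue problem to a scalar transcendental equation and then iterate it to obtain the two-term expansion. For $E = -k^2 < 0$, solving $-f'' = Ef$ together with the Neumann condition $f'(0)=0$ forces $f(x) = A\cosh(kx)$, and the Robin condition at $\ell$ becomes
\begin{equation}\label{eq-trans}
k \tanh(k\ell) = \beta.
\end{equation}
For $E \ge 0$ the Neumann-admissible solutions are $\cos(\sqrt{E}\,x)$ or the constants, neither of which gives a negative eigenvalue. Since $k \mapsto k\tanh(k\ell)$ is smooth and strictly increasing on $(0,+\infty)$, vanishes at $0$, and tends to $+\infty$, equation \eqref{eq-trans} has a unique solution $k = k_\ell > 0$. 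This yields the unique strictly negative eigenvalue $E_N(\beta,\ell) = -k_\ell^2$ with eigenfunction $\cosh(k_\ell x) = \cosh\bigl(\sqrt{-E_N(\beta,\ell)}\,x\bigr)$, and from $\tanh(k\ell)\le 1$ one sees at once that $k_\ell \ge \beta$ and $k_\ell \to \beta$ as $\ell \to +\infty$.

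The asymptotic analysis of \eqref{eq-trans} will be carried out by iteration. Rewriting $\tanh(k\ell) = 1 - \frac{2e^{-2k\ell}}{1 + e^{-2k\ell}}$ and setting $k = \beta + \epsilon$ with $\epsilon \ge 0$, \eqref{eq-trans} becomes the fixed-point relation
\[
\epsilon = \frac{2k\, e^{-2k\ell}}{1 + e^{-2k\ell}}.
\]
Since the right-hand side is $O(e^{-2\beta\ell})$, a first pass (replacing $k$ by $\beta$ and linearising the denominator) gives $\epsilon = 2\beta\, e^{-2\beta\ell} + O(e^{-4\beta\ell})$. For the second pass I would expand $e^{-2k\ell} = e^{-2\beta\ell}\bigl(1 - 2\epsilon\ell + O(\ell^2 e^{-4\beta\ell})\bigr)$ and $1/(1+e^{-2k\ell}) = 1 - e^{-2\beta\ell} + O(e^{-4\beta\ell})$, substitute the first-pass value of $\epsilon$, and keep track of the factor of $\ell$ produced by $e^{-2\epsilon\ell}$; a short bookkeeping then yields
\[
\epsilon = 2\beta\, e^{-2\beta\ell} + 2\beta(1 - 4\beta\ell)\, e^{-4\beta\ell} + O(\ell^2 e^{-6\beta\ell}).
\]

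Finally, expanding $E_N(\beta,\ell) = -(\beta+\epsilon)^2 = -\beta^2 - 2\beta\epsilon - \epsilon^2$ and noting that $\epsilon^2 = 4\beta^2 e^{-4\beta\ell} + O(\ell e^{-6\beta\ell})$, one collects the contributions of orders $e^{-2\beta\ell}$, $e^{-4\beta\ell}$, and $\ell\,e^{-4\beta\ell}$; the coefficient of $e^{-4\beta\ell}$ assembles as $-4\beta^2 + 16\beta^3\ell - 4\beta^2 = 8\beta^2(2\beta\ell-1)$, which gives exactly \eqref{eq-1dn}. The only delicate point is the second iteration: the linear-in-$\ell$ correction is produced by the Taylor expansion of $e^{-2\epsilon\ell}$, and overlooking it would yield a wrong coefficient in front of $e^{-4\beta\ell}$. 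Once \eqref{eq-trans} is in hand, everything else is an elementary manipulation.
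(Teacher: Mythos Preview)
Your argument is correct and follows essentially the same route as the paper: both reduce to the transcendental equation $k\tanh(k\ell)=\beta$, use monotonicity for uniqueness, and then iterate twice to extract the two-term expansion of $k$ before squaring. The only cosmetic difference is the form of the fixed-point relation (the paper writes $k=\beta\,\dfrac{1+e^{-2k\ell}}{1-e^{-2k\ell}}$ rather than your $\epsilon=\dfrac{2k\,e^{-2k\ell}}{1+e^{-2k\ell}}$), which is algebraically equivalent.
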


\begin{proof}

Let us write $E_N(\beta,\ell)=-k^2$ with $k>0$. The associated eigenfunction $f$
must be of the form $f(x)=Ae^{kx}+Be^{-kx}$
with some $(A,B)\in\RR^2\setminus\big\{(0,0)\big\}$. Taking into the account 
the boundary conditions we get the linear system
\[
A-B=0\,,\, (k-\beta)e^{k\ell}A-(k+\beta)e^{-k\ell}B=0\,.
\]
It follows that $f(x)=2B \cosh(kx)$. The system has non-trivial solutions iff 
\begin{equation}
   \label{eq-n1}
   (k-\beta)e^{k\ell}=(k+\beta)e^{-k\ell}\,.
\end{equation}
This can be rewritten as $k\ell \tanh (k\ell)=\beta\ell$.
One easily checks that the function
\[
(0,+\infty)\ni t\mapsto t \tanh t\in(0,+\infty)
\]
is a bijection, which means that the solution $k$ to ~\eqref{eq-n1}
is defined uniquely, which shows that we have exactly one negative eigenvalue.

To calculate its asymptotics, we first take into account the signs of all terms in~\eqref{eq-n1},
which gives $k>\beta\,$.

Rewriting \eqref{eq-n1} in the form
\[
(k-\beta)  = 2 \beta e^{-2k\ell} / (1- e^{-2k\ell}) =   2 \beta e^{-2 \beta \ell} e^{-2(k-\beta) \ell} / (1- e^{-2(k-\beta)\ell}e^{-2 \beta \ell}) \,,
\]
we get that 
\begin{equation}\label{eq-n2}
k-\beta = O (e^{-2 \beta \ell})\,.
 \end{equation}
It follows also from \eqref{eq-n1} that
\begin{equation}
        \label{eq-n3}
k=\dfrac{1+e^{-2k\ell}}{1-e^{-2k\ell}}\beta=
\big(1+2e^{-2k\ell}+O(e^{-4k\ell})\big)\beta\,,
\quad \ell\to +\infty\,.
\end{equation}
Implementing \eqref{eq-n2}, we infer that
\begin{equation}
       \label{eq-kaux1}
k=\big(1+2e^{-2\beta \ell}+O(\ell e^{-4\beta\ell})\big)\beta
=\beta +2\beta e^{-2\beta \ell}+O(\ell e^{-4\beta\ell})\,.
\end{equation}
By taking an additional term in \eqref{eq-n3},
\[
k=\dfrac{1+e^{-2k\ell}}{1-e^{-2k\ell}}\beta=
\big(1+2e^{-2k\ell}+2e^{-4k\ell}+O(e^{-6k\ell})\big)\beta\,,
\quad \ell\to +\infty\,,
\]
and by using~\eqref{eq-kaux1} one gets
\begin{equation}\label{dern}
k=\beta+2\beta e^{-2\beta\ell}+2\beta(1-4\beta\ell)e^{-4\beta\ell}+O(\ell^2e^{-6\beta\ell})\,.
\end{equation}
Computing $E=-k^2$ gives the result.
\end{proof}

\begin{lemma}\label{lem1dd}
For $\beta>0$ and $\ell>0$, denote by $D_{\beta,\ell}$ the operator
acting in $L^2(0,\ell)$ as $f\mapsto -f''$ on the functions
$f\in H^2(0,\ell)$ satisfying the boundary conditions
$f(0)=0$ and $f'(\ell)=\beta f(\ell)$, and let $E_D(\beta,\ell)$ denote its lowest eigenvalue.
Then $E_D(\beta,\ell)<0$ iff $\beta\ell>1$, and in that case it is
the only negative eigenvalue. Furthermore,
\begin{equation}
   \label{eq-1dd}
E_D(\beta,\ell)=-\beta^2+4\beta^2e^{-2\beta \ell}+8\beta^2(2\beta\ell-1)e^{-4\beta\ell}+O(\ell^2 e^{-6\beta\ell})
\text{ as $\ell$ tends to $+\infty\,$,}
\end{equation}
and the associated eigenfunction is $x\mapsto \sinh(\sqrt{-E_D(\beta,\ell)} x)$.
\end{lemma}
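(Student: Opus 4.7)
The plan is to follow the exact same scheme as in Lemma~\ref{lem1dn}, replacing the Neumann boundary condition at $0$ by the Dirichlet one. First I would write any hypothetical negative eigenvalue as $E=-k^2$ with $k>0$ and look for an associated eigenfunction in the form $f(x)=Ae^{kx}+Be^{-kx}$. The Dirichlet condition $f(0)=0$ forces $A+B=0$, so (up to a scalar multiple) $f(x)=\sinh(kx)$, which also yields the announced formula for the eigenfunction. Plugging into the Robin condition $f'(\ell)=\beta f(\ell)$ gives the transcendental equation
\[
k\cosh(k\ell)=\beta\sinh(k\ell),\qquad\text{equivalently}\qquad k=\beta\tanh(k\ell).
\]

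Next, I would settle existence, uniqueness and the threshold condition $\beta\ell>1$ by rescaling $s:=k\ell$ and studying $g(s):=\beta\ell\,\tanh s-s$ on $(0,+\infty)$. One checks $g(0)=0$, $g'(0)=\beta\ell-1$, $g(s)\to-\infty$ at infinity, and that $g'$ has at most one zero on $(0,+\infty)$. This implies that $g$ vanishes for some $s>0$ if and only if $\beta\ell>1$, and in that case the positive zero is unique. Hence $D_{\beta,\ell}$ admits a strictly negative eigenvalue precisely when $\beta\ell>1$, and it is then simple and unique.

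The heart of the argument is the asymptotic expansion. Following exactly the pattern of Lemma~\ref{lem1dn}, I would rewrite $k=\beta\tanh(k\ell)$ as
\[
\beta-k=\frac{2\beta e^{-2k\ell}}{1+e^{-2k\ell}},
\]
which already yields $0<\beta-k=O(e^{-2\beta\ell})$ as $\ell\to+\infty$, hence $k=\beta+O(e^{-2\beta\ell})$. I would then iterate once: expand
\[
k=\beta-2\beta e^{-2k\ell}+2\beta e^{-4k\ell}+O(e^{-6k\ell})
\]
and substitute $e^{-2k\ell}=e^{-2\beta\ell}\bigl(1-2(k-\beta)\ell+O(\ell^2 e^{-4\beta\ell})\bigr)$ together with the preliminary estimate $k-\beta=-2\beta e^{-2\beta\ell}+O(e^{-4\beta\ell})$. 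Collecting terms of size $e^{-2\beta\ell}$ and $e^{-4\beta\ell}$ yields
\[
k=\beta-2\beta e^{-2\beta\ell}+2\beta(1-4\beta\ell)e^{-4\beta\ell}+O(\ell^2 e^{-6\beta\ell}),
\]
the analogue of \eqref{dern} with an opposite sign in the leading exponential term. Squaring and negating gives the claimed expansion \eqref{eq-1dd}.

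The only non-routine point is checking the threshold $\beta\ell>1$ and the uniqueness of the negative eigenvalue; the rest is a direct transcription of the Neumann calculation with $\coth\leftrightarrow\tanh$, so I do not expect any serious obstacle. One sanity check worth keeping in mind is that the coefficients of $e^{-2\beta\ell}$ in $E_N$ and $E_D$ are opposite, while the $e^{-4\beta\ell}$ coefficient is the same $8\beta^2(2\beta\ell-1)$ in both cases, which is exactly what the calculation above produces.
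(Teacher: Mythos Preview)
Your proposal is correct and follows essentially the same route as the paper's proof: derive the transcendental equation (the paper writes it equivalently as $k\ell\coth(k\ell)=\beta\ell$ and uses that $t\mapsto t\coth t$ is a bijection $(0,\infty)\to(1,\infty)$ for existence/uniqueness, which is the same content as your analysis of $g(s)=\beta\ell\tanh s-s$), then iterate to extract the asymptotics. The only place to tighten is the step ``which already yields $\beta-k=O(e^{-2\beta\ell})$'': from $\beta-k=2\beta e^{-2k\ell}/(1+e^{-2k\ell})$ you first need a crude a priori lower bound on $k$ (e.g.\ $k>\beta-1/\ell$, which the paper derives explicitly and which your own $g(s)$ argument supplies by evaluating $g$ at $s=\beta\ell-1$) before you may replace $e^{-2k\ell}$ by $O(e^{-2\beta\ell})$.
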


\begin{proof}
Let us write $E_D(\beta,\ell)=-k^2$ with $k>0$.
The associated eigenfunction $f$ is of the form $f=Ae^{kx}+Be^{-kx}$
with some $(A,B)\in\RR^2\setminus\big\{(0,0)\big\}$. Taking into the account 
the boundary conditions we get the linear system
\[
A+B=0\,,\, (k-\beta)e^{k\ell}A-(k+\beta)e^{-k\ell}B=0\,,
\]
which gives the representation $f(x)=2A\sinh(kx)$.
Non-trivial solutions exist iff 
\begin{equation}
   \label{eq-d1}
(\beta+k)e^{-k\ell}=(\beta-k)e^{k\ell}\,.   
\end{equation}
The preceding equation can be rewritten as
\[
k\ell\coth(k\ell)=\beta\ell\,.
\]
One easily checks that the function
\[
(0,+\infty)\ni t \mapsto t \coth t \in(1,+\infty)
\]
is a bijection, which shows that \eqref{eq-d1} has a solution iff $\beta\ell>1$,
and if it is the case, the solution is unique, which gives in turn
the unicity of the negative eigenvalue.

For the rest of the proof we assume that 
\[
\beta\ell>1\,.
\]
By considering the signs of both sides in \eqref{eq-d1} we conclude that $k<\beta$. 
Furthermore, we may rewrite \eqref{eq-d1} as $g(k)=0$
with
\[
g(k)=\log(\beta+k)-\log(\beta-k)-2k\ell\,.
\]
We have $g(0+)=0$ and $g(\beta-)=+\infty\,$.
The equation $g'(k)=0$ takes the form
\[
\beta^2-k^2=\dfrac{\beta}{\ell},
\]
and its unique solution is
\[
k^*= \beta\sqrt{1-\dfrac{1}{\beta\ell}}\,.
\]
It follows that the equation $g(k)=0$ has a unique solution $k$
in $(0,\beta)$ and that $k\in(k^*,\beta)\,$. 
On the other hand,  we obtain the  estimate
\[
k^*>\beta\Big(1-\dfrac{1}{\beta\ell}\Big)=\beta-\dfrac{1}{\ell}.
\]
Hence, the solution of $g(k)=0$ satisfies
\begin{equation}\label{rough}
\beta - \dfrac{1}{\ell} < k < \beta\,.
\end{equation}
We rewrite \eqref{eq-d1} in the form
 \[
 \beta -k = \dfrac{2 k}{e^{2 k \ell}- 1}\,.
 \]
 and we deduce with the help of~\eqref{rough} that
 \[
 \beta -k = O (e^{-2 \beta \ell}) \text{ as } \ell \rightarrow +\infty.
 \]
By going through the same steps
as in the proof of Lemma~\ref{lem1dn}, one gets the result.
\end{proof}

\begin{prop}\label{prop-robin1d}
For $\beta>0$ and $\ell>0$, let $B_{\ell}$ denote
the operator
acting in $L^2(-\ell,\ell)$ as $f\mapsto -f''$ on the functions
$f\in H^2(-\ell,\ell)$ satisfying the boundary conditions \break 
$f'(\pm \ell)=\pm \beta f(\pm \ell)\,$, and
let $E_1(\ell)$ and $E_2(\ell)$ be the two lowest eigenvalues,
$E_1(\ell)<E_2(\ell)\,$. Then:
\begin{itemize}
\item $E_1(\ell)<0\,$,
\item $E_2(\ell)<0$ iff $\beta\ell>1\,$,
\item all other eigenvalues are non-negative.
\end{itemize}
Furthermore,
\begin{align*}
 E_1(\ell)&=-\beta^2-4\beta^2e^{-2\beta \ell}+8\beta^2(2\beta\ell-1)e^{-4\beta\ell}+O(\ell^2 e^{-6\beta\ell})\,,\\
E_2(\ell)&= -\beta^2+4\beta^2e^{-2\beta \ell}+8\beta^2(2\beta\ell-1)e^{-4\beta\ell}+O(\ell^2 e^{-6\beta\ell})\,,
\end{align*}
as $\ell$ tends to $+\infty$. The respective eigenfunctions $f_1$ and $f_2$
are
\[
f_1(x)=\cosh \big(\sqrt{-E_1(\ell)}x\big)\,,
\quad
f_2(x)=\sinh \big(\sqrt{-E_2(\ell)}x\big)\,.
\]

\end{prop}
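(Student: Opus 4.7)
The plan is to exploit the reflection symmetry $x\mapsto -x$, which is clearly a symmetry of both the differential expression $-d^2/dx^2$ and of the boundary conditions $f'(\pm\ell)=\pm\beta f(\pm\ell)$. Consequently $B_\ell$ commutes with the unitary parity operator $P:f(x)\mapsto f(-x)$, and $L^2(-\ell,\ell)$ decomposes as an orthogonal direct sum of the even and odd subspaces, each of which is invariant under $B_\ell$.

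Next, I would identify each of the two restrictions with one of the operators already studied in Lemma~\ref{lem1dn} and Lemma~\ref{lem1dd}. For an even $f\in D(B_\ell)$ one has $f'(0)=0$, and the restriction $g=f|_{(0,\ell)}$ belongs to $H^2(0,\ell)$ with $g'(0)=0$ and $g'(\ell)=\beta g(\ell)$; the map $f\mapsto g$ is unitary up to the factor $\sqrt{2}$ and intertwines the restriction of $B_\ell$ with $N_{\beta,\ell}$. Similarly, for an odd $f\in D(B_\ell)$ one has $f(0)=0$ and $g:=f|_{(0,\ell)}$ satisfies the boundary conditions of $D_{\beta,\ell}$, giving a unitary equivalence between the odd part of $B_\ell$ and $D_{\beta,\ell}$. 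Hence
\[
\spec B_\ell=\spec N_{\beta,\ell}\cup\spec D_{\beta,\ell}.
\]

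From Lemma~\ref{lem1dn}, the spectrum of $N_{\beta,\ell}$ contains exactly one negative eigenvalue $E_N(\beta,\ell)$ with eigenfunction $\cosh\bigl(\sqrt{-E_N}\,x\bigr)$, which is indeed even and extends smoothly across $0$. From Lemma~\ref{lem1dd}, the spectrum of $D_{\beta,\ell}$ has a unique negative eigenvalue $E_D(\beta,\ell)$ precisely when $\beta\ell>1$, with eigenfunction $\sinh\bigl(\sqrt{-E_D}\,x\bigr)$, which is odd. Comparing the two asymptotic expansions shows $E_N(\beta,\ell)<-\beta^2<E_D(\beta,\ell)$ for all large $\ell$, so $E_1(\ell)=E_N(\beta,\ell)$ and $E_2(\ell)=E_D(\beta,\ell)$; the asymptotic formulas are then just those in \eqref{eq-1dn} and \eqref{eq-1dd}, and the claims about the signs follow immediately, as all other eigenvalues of $N_{\beta,\ell}$ and $D_{\beta,\ell}$ are non-negative.

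There is no real obstacle; the only care needed is in the identification step, to verify that even (resp.\ odd) functions in $H^2(-\ell,\ell)$ satisfying the Robin conditions at $\pm\ell$ correspond exactly, under restriction to $(0,\ell)$, to the domains of $N_{\beta,\ell}$ and $D_{\beta,\ell}$ respectively. This requires checking that an even $H^2$ function automatically has vanishing derivative at $0$ (which follows by continuity of $f'$ and oddness of $f'$), and that an odd $H^2$ function vanishes at $0$; and conversely that one may extend by reflection and obtain an $H^2$ function on $(-\ell,\ell)$. These are standard and easy verifications.
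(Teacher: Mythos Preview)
Your approach is essentially the paper's: exploit the parity symmetry to reduce to the half-interval operators $N_{\beta,\ell}$ and $D_{\beta,\ell}$ of Lemmas~\ref{lem1dn} and~\ref{lem1dd}, then quote those lemmas. The one difference is in how you decide which of $E_N,E_D$ is the smaller: you compare the asymptotic expansions, which settles it only for large $\ell$; the paper instead invokes the Sturm--Liouville nodal count (the ground state of $B_\ell$ is nonvanishing, hence even; the second eigenfunction has exactly one node, hence odd), which gives $E_1=E_N$ and $E_2=E_D$ for every $\ell$. For the asymptotic formulas your argument is complete, and for the three bullet points about signs your decomposition $\spec B_\ell=\spec N_{\beta,\ell}\cup\spec D_{\beta,\ell}$ already suffices without knowing the ordering. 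Only the identification $f_1=\cosh(\cdot)$, $f_2=\sinh(\cdot)$ for \emph{all} $\ell$ would require either the nodal argument or the one-line variational observation that the form domain of $D_{\beta,\ell}$ is contained in that of $N_{\beta,\ell}$, forcing $E_N(\beta,\ell)<E_D(\beta,\ell)$.
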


\begin{proof}
Let us use the notation of Lemmas~\ref{lem1dn} and~\ref{lem1dd}.
Note that:
\begin{itemize}
\item $B_{\ell}$ commutes with the reflections with respect to the origin,
\item its first eigenfunction $f_1$ is non-vanishing and even, hence, $f_1'(0)=0\,$,
\item its second eigenfunction $f_2$ has one zero in $(-\ell,\ell)$
and is odd, hence $f_2(0)=0\,$.
\end{itemize}
Therefore, $E_1(\ell)=E_N(\beta,\ell)$ and $E_2(\ell)=E_D(\beta,\ell)\,$, and the result follows
from Lemmas~\ref{lem1dn} and~\ref{lem1dd}.
\end{proof}

\end{document}